 \numberwithin{equation}{section}
\newtheorem{theorem}{Theorem}[section]
\newtheorem{lemma}{Lemma}[section]
\newtheorem{remark}{Remark}[section]
\newtheorem{definition}{Definition}[section]
\newcommand{\beq}{\begin{eqnarray}}
\newcommand{\eeq}{\end{eqnarray}}
\newcommand{\beqno}{\begin{eqnarray*}}
\newcommand{\eeqno}{\end{eqnarray*}}
\newenvironment {Proof of Theorem} {\noindent {\bf Proof of Theorem 2.1}}{\quad $\square$\par\vspace{3mm}}
\journal{SCIENCE CHINA Mathematics}
\begin{document}
\newcommand{\D}{\displaystyle}
\begin{frontmatter}



\title{Global well-posedness for the dynamical $Q$-tensor model of liquid crystals\emph{}}


\author{Jinrui Huang$^{1}$}
\author{Shijin Ding$^{2*}$\corref{cor1}}
\cortext[cor2]{Corresponding author. Email: dingsj@scnu.edu.cn}
\address{$^{1}$School of Mathematics and Computational Science, Wuyi University, Jiangmen 529020, P.R. China}
\address{$^{2}$School of Mathematical Sciences, South China Normal University,  Guangzhou  510631,  P.R. China}

\begin{abstract}
In this paper, we consider a complex fluid modeling nematic liquid crystal flows, which is described by a system coupling Navier-Stokes equations with a parabolic $Q$-tensor system. We first prove the global existence of weak solutions in dimension three. Furthermore, the global well-posedness of strong solutions is studied with sufficiently large viscosity of fluid. Finally, we show a continuous dependence result on the initial data which directly yields the weak-strong uniqueness of solutions.
\end{abstract}

\begin{keyword}
$Q$-tensor\sep nematic liquid crystals\sep global solutions\sep uniqueness.

{{\it AMS Subject Classification (2000)}:} \ 76N10, 35Q30, 35R35.
\end{keyword}

\end{frontmatter}

\newpage
\setcounter{section}{0} \setcounter{equation}{0}
\section{Introduction}
The most comprehensive continuum theory for nematic liquid crystal flows is the $Q$-tensor theory proposed by P.G. de Gennes \cite{gennes}, in which the dynamics of nematic liquid crystals is modeled by the Navier-Stokes equations coupled with a parabolic equation of $Q$-tensor. $Q$-tensor is the order parameter which is symmetric and traceless $3\times3$ matrix, and can be viewed as the second order moment of the orientational distribution function $f(x,m,t)$ \cite{wang-zhang-zhang}:
$$Q=\int_{S^2}\left(m\otimes m-\frac13I_3\right)f{\rm d}m.$$

The nematic liquid crystal can be classified accordingly. If $Q=0$, we say that the liquid crystal is isotropic. If $Q$ admits two equal non-zero eigenvalues, it is called uniaxial. If $Q$ has three distinct eigenvalues, it is said to be biaxial.

In uniaxial case (see \cite{ding-huang-lin,huang-ding,huang-lin-wang} and their references for details), $Q$ can be written in the special form
$$Q=s\left(n\otimes n-\frac13I_3\right), \ s\in\mathbb{R}-\{0\}, \ n\in S^2.$$

If the liquid crystal is biaxial, $Q$-tensor can be represented in the special form  \cite{Majumdar}:
$$Q=s\left(n\otimes n-\frac13I_3\right)+r\left(m\otimes m-\frac13I_3\right), \ s,r\in\mathbb{R} \ {\rm and}\ n,m\in S^2.$$

Landau-de Gennes theory gives us the following energy functional which consists of the elastic energy and the bulk energy:
\begin{eqnarray*}
    \mathcal{F}_{LG}[Q]=\int_{\Omega} f_{elasticity}(Q,\nabla Q)(x)+f_{bulk}(Q)(x){\rm d}x.
\end{eqnarray*}

The most widely-accepted forms of the elastic energy and the bulk energy densities are defined as follows (see \cite{Mottram}):
\begin{eqnarray*}
    f_{elasticity}(Q,\nabla Q)&=&\frac12\sum_{i,j,k=1}^3\left[L_1\left(\frac{\partial Q_{ij}}{\partial x_k}\right)^2+L_2\frac{\partial Q_{ij}}{\partial x_j}\frac{\partial Q_{ik}}{\partial x_k}+L_3\frac{\partial Q_{ik}}{\partial x_j}\frac{\partial Q_{ij}}{\partial x_k}\right]
    \\&&
    +\frac12\sum_{i,j,k,l}^3\left[L_4e_{lik}Q_{lj}\frac{\partial Q_{ij}}{\partial x_k}+L_5Q_{lk}\frac{\partial Q_{ij}}{\partial x_l}\frac{\partial Q_{ij}}{\partial x_k}\right]
    \\
    f_{bulk}(Q)&=&\frac{a}{2}{\rm tr}(Q^2)-\frac{b}{3}{\rm tr}(Q^3)+\frac{c}{4}\left({\rm tr}(Q^2)\right)^2,
\end{eqnarray*} where $L_i$ $(i=1,2,3,4,5)$ are elastic constants, $a,b,c$ are material-dependent and temperature-dependent constants. $e_{ijk}$ is the Levi-Civita symbol, i.e., $e_{ikj}$ is $1$ if $(i,j,k)$ is an even permutation of $(1,2,3)$, $-1$ if it is an odd permutation, and $0$ if any index is repeated. Throughout this paper, we will use the Einstein summation convention.

One of the most famous dynamical $Q$-tensor models is proposed by Beris-Edwards \cite{Beris,wang-zhang-zhang} (see also \cite{qian-shen}). Define
\begin{eqnarray*}
    H_Q=-\frac{\delta {\mathcal{F}_{LG}[Q]}}{\delta Q},
\end{eqnarray*} where we should note that this variation must be subject to the constraint that $Q$ is both symmetric and traceless.
Then the Beris-Edwards model can be written in the following form:
\begin{eqnarray} \label{model}
\begin{cases}\label{be}
      u_t+u\cdot\nabla u+\nabla P=\nabla\cdot (\sigma^s+\sigma^a+\sigma^d),
      \\
      Q_t+u\cdot\nabla Q=\Gamma H_Q+F(Q,D)+\Omega\cdot Q-Q\cdot \Omega,
      \\
      \nabla\cdot u=0,
\end{cases}
\end{eqnarray}
for $(x,t)\in \mathbb{R}^3\times (0,+\infty)$ with the initial conditions:
\begin{eqnarray}\label{ic}
   (u,Q)\big|_{t=0}=(u_0(x),Q_0(x)) \ {\rm{in}}\ \mathbb{R}^3.
\end{eqnarray}
Here $u:\mathbb{R}^3\times [0,+\infty)\rightarrow \mathbb{R}^3$ represents the velocity field of the fluid, $Q:\mathbb{R}^3\times [0,+\infty)\rightarrow S_0^{(3)}$ represents the order parameter, with $S_0^{(3)}\subseteq \mathbb{M}^{3\times3}$ representing the space of $Q$-tensors in three dimensions, i.e.,
$$S_0^{(3)}=\{Q\in \mathbb{M}^{3\times3}: Q_{ij}=Q_{ji}, {\rm tr}(Q)=0, i,j=1,2,3\},$$
$P(x,t):\mathbb{R}^3\times [0,+\infty)\rightarrow \mathbb{R}$ is the pressure function, and
\begin{eqnarray*}
    &&\sigma^d_{ij}=-\frac{\partial {\mathcal{F}}_{LG}[Q]}{\partial Q_{kl,j}}Q_{kl,i},\ \sigma^a=Q\cdot H_Q-H_Q\cdot Q,\ \sigma^s=\mu D-F(Q,H_Q),
    \\&&
    D_{ij}=\frac{u_{i,j}+u_{j,i}}{2},\ \Omega_{ij}=\frac{u_{i,j}-u_{j,i}}{2},\ \Gamma,\mu>0,
    \\&&
    F(Q,A)=\xi\left[\left(Q+\frac13I_3\right)\cdot A+A\cdot\left(Q+\frac13I_3\right)-2\left(Q+\frac13I_3\right)(A:Q)\right].
\end{eqnarray*}

To guarantee that the basic energy is bounded from below and to get the dissipation law, we assume, throughout  this paper, that
\begin{eqnarray}
    \label{elasicity constant}
    c>0,\ L_5=0,\ L_1>0,\ L_2+L_3\geq 0.
\end{eqnarray}

In fact, there are other ways to reach this aim. For example, in \cite{wang-zhang-zhang2}, the authors assume $L_1>0$, $L_1+L_2+L_3>0$ and $L_4=L_5=0$ which is sufficient to guarantee that the basic energy is bounded from below. While in \cite{Bauman}, Bauman-Park-Phillips considered two cases: (i)\ $L_1>0$, $L_2+L_3\geq 0$, or (ii)\ $L_1+L_2+L_3>0$, $L_2+L_3\leq 0$. For the second case, one only needs to observe that
\begin{eqnarray*}
    |\nabla Q|^2-|{\rm div}Q|^2-|{\rm curl} Q|^2=Q_{ij,k}Q_{ik,j}-Q_{ij,j}Q_{ik,k}
\end{eqnarray*} is a null Lagrangian and therefore, if we let
\begin{eqnarray*}
    f^\prime_{elasticity}(Q)=\frac{L_1+L_2+L_3}{2}|\nabla Q|^2-\frac{L_2+L_3}{2}|{\rm curl}Q|^2,
\end{eqnarray*} then $f_{elasticity}-f^\prime_{elasticity}$ is a null Lagrangian.

For the Beris-Edwards dynamical model (\ref{be}) under the initial condition (\ref{ic}), Paicu et al obtained several results for the Cauchy problems in $R^d$ ($d=2$ or $3$). In \cite{Paicu2}, if $\xi=0$ in the expression of $F(Q, A)$ which means that the molecules are such that they only tumble in a shear flow, but are not
aligned by such a flow,
Paicu and Zarnescu proved the existence of weak solutions, the existence of a Lyapunov functional for the smooth solutions and used cancelations that allow its existence to prove higher global regularity in dimension 2. The weak-strong uniqueness in dimension 2 was also given. In \cite{Paicu1}, the situation $\xi\not=0$ was considered and similar results were obtained.

Feireisl, Rocca, Schimperna and Zarnescu in \cite{F} considered the Cauchy problem for the model coupling with energy equation. They constructed global in time weak solutions for arbitrary physically relevant initial data.

As for the initial-boundary problems, we refer to \cite{Abel1,Abel2,F-R,F-R-R,G-R}.

We should note that all above papers for dynamical model only concerned the simple situation in which the elastic energy reads as
$$
\mathcal{F}_{LG}[Q]=\int_{\Omega} \frac{L_1}2|\nabla Q|^2+f_{bulk}(Q),
$$
that is, $L_2=L_3=L_4=L_5=0$.

In this paper, we study the Cauchy problem for more general situations that we only assume (\ref{elasicity constant}) holds. For simplicity, we also assume that
\begin{eqnarray}
    \xi=0.
\end{eqnarray}
We remark that we have to assume $L_2+L_3\geq 0$ in order to get the $L^2$ space-time estimates of $Q$ with second order derivative in space from the basic energy law.

In order to understand Beris-Edwards' dynamical model, we should carry out the above mentioned variation calculation for the Landau-de Gennes functional, i.e. ${\delta {F_{LG}[Q]}}/{\delta Q}$ , under the constraint  that $Q$ is in $S^3_0$. For this reason, we introduce the following functional with Lagrangian multipliers $\lambda$ and $\lambda_{ij}$ ($i,j=1, 2, 3$):
\begin{eqnarray*}
    \mathcal{L}^{\lambda_0,\lambda_{ij}}_{LG}[Q]=\mathcal{F}_{LG}[Q]+\lambda_0{\rm tr}Q+\lambda_{ij}(Q_{ij}-Q_{ji})
\end{eqnarray*}
so that
\begin{eqnarray}\label{hq}
    H_Q=-\frac{\delta\mathcal{L}^{\lambda_0,\lambda_{ij}}_{LG}[Q]}{\delta Q},
\end{eqnarray}
where the variation is carried out in ${M}^{3\times3}$ subject to no constraints.

Let $Q$ be any minimizer of $F_{LG}[Q]$ in $S^3_0$, then
\begin{eqnarray*}
\frac{\delta\mathcal{L}^{\lambda_0,\lambda_{ij}}_{LG}[Q]}{\delta Q}=0.
\end{eqnarray*}

Since
\begin{eqnarray}\label{var}
  \nonumber -\frac{\delta\mathcal{L}^{\lambda_0,\lambda_{ij}}_{LG}[Q]}{\delta Q}
  &=&
   L_1\Delta Q_{ij}+(L_2+L_3)Q_{ik,kj}+L_4e_{lik}Q_{lj,k}\nonumber\\
       && -a Q_{ij}+bQ_{ik}Q_{kj}-c{\rm tr}(Q^2)Q_{ij}-\lambda_0\delta_{ij}-(\lambda_{ij}-\lambda_{ji}),
\end{eqnarray}
we obtain
\begin{eqnarray}\label{eq}
       && L_1\Delta Q_{ij}+(L_2+L_3)Q_{ik,kj}+L_4e_{lik}Q_{lj,k}\nonumber\\
       && -a Q_{ij}+bQ_{ik}Q_{kj}-c{\rm tr}(Q^2)Q_{ij}-\lambda_0\delta_{ij}-(\lambda_{ij}-\lambda_{ji})\nonumber\\
    &&=0.
\end{eqnarray}

Taking trace on the both sides of (\ref{eq}) and noticing that trace$(\delta_{ij})=3$, we have
\begin{eqnarray}
\label{lambda_0}
   \lambda_0=\frac{(L_2+L_3)}3Q_{kp,kp}+\frac b3{\rm tr}(Q^2)+\frac{L_4}3e_{lpk}Q_{lp,k},
\end{eqnarray}

It follows from (\ref{eq}) that, if $i\neq j$, there holds
\begin{eqnarray}\label{lambdaij}
   \lambda_{ij}-\lambda_{ji}=L_1\Delta Q_{ij}+(L_2+L_3)Q_{ik,kj}+L_4e_{lik}Q_{lj,k}
    -a Q_{ij}+bQ_{ik}Q_{kj}-c{\rm tr}(Q^2)Q_{ij}
\end{eqnarray}
which means from the symmetry property, $Q_{ij}=Q_{ji}$, that for $i\neq j$
\begin{eqnarray}\label{lambdaji}
   \lambda_{ji}-\lambda_{ij}
     =L_1\Delta Q_{ij}+(L_2+L_3)Q_{jk,ki}+L_4e_{ljk}Q_{li,k}
    -a Q_{ij}+bQ_{ik}Q_{kj}-c{\rm tr}(Q^2)Q_{ij}.
\end{eqnarray}

Subtracting (\ref{lambdaji}) from (\ref{lambdaij}), one obtains
\begin{eqnarray}\label{lambda}
    \lambda_{ij}-\lambda_{ji}=\frac{L_2+L_3}{2}(Q_{ik,kj}-Q_{jk,ki})+\frac{L_4}{2}(e_{lik}Q_{lj,k}-e_{ljk}Q_{li,k}).
\end{eqnarray}

Finally, inserting (\ref{lambda_0}) and (\ref{lambda}) into (\ref{var}) and (\ref{hq}), one gets
\begin{eqnarray*}
   (H_Q)_{ij}&=&\underbrace{L_1\Delta Q_{ij}+\frac{L_2+L_3}{2}\left(Q_{ik,kj}+Q_{jk,ki}-\frac23\delta_{ij}Q_{kp,kp}\right)}_{(M_Q)_{ij}}
   \\&&
   \underbrace{+\frac{L_4}{2}\left(e_{lik}Q_{lj,k}+e_{ljk}Q_{li,k}-\frac23\delta_{ij}e_{lpk}Q_{lp,k}\right)}_{(E_Q)_{ij}}
    \\&&
    \underbrace{-a Q_{ij}+b\left(Q_{ik}Q_{kj}-\frac13\delta_{ij}|Q|^2\right)-c{\rm tr}(Q^2)Q_{ij}}_{(B_Q)_{ij}}.
\end{eqnarray*}

On the other hand, we have the first term of the stress tensor in the momentum equations as follows:
\begin{eqnarray*}
    \sigma^d_{ij}&=&-\frac{\partial \mathcal{F}_{LG}[Q]}{\partial Q_{kl,j}}Q_{kl,i}\\
       &=&-\left(L_1Q_{kl,i}Q_{kl,j}+L_2Q_{km,m}Q_{kj,i}+L_3Q_{kj,l}Q_{kl,i}+\frac{L_4}{2}e_{mkj}Q_{ml}Q_{kl,i}\right).
\end{eqnarray*}

Before continuing, we would like to have some more words here on the developments for the $Q$-tensor models modeling the nematic liquid crystals made by Wei Wang, Pingwen Zhang and Zhifei Zhang in recent years.

As we know, the Landau-de Gennes functional is defined from phenomenological theory to describe: the elastic energy of any distortion to the structure of the material, thermotropic energy (bulk energy) to dictates the
the preferred phase of the material and so on.

In the paper \cite{han}, Han-Luo-Wang-Zhang established,
based on Onsager's molecular theory and using Bingham closure, a systematic way of liquid crystal modeling to build connection between microscopic theory and macroscopic theory. A new Q-tensor theory which leads to liquid crystals with certain shape was proposed. Making uniaxial assumption, they can recover the Oseen-Frank theory from the derived Q-tensor theory, and the Oseen-Frank model coefficients can be examined. For further study on this topic, one can also refer to \cite{chen-zhang,xu-zhang}.

Subsequently, in the paper \cite{wang-zhang-zhang}, Wang-Zhang-Zhang, starting from Doi-Onsager equation for the liquid crystals and also by the Bingham closure, derived a new system of dynamical Q-tensor equations which is somewhat different from Beris-Edwards model. Then they derived the Ericksen-Leslie equation from the
new Q-tensor equation by taking the small Deborah number limit.

In \cite{wang-zhang-zhang2}, starting from Beris-Edwards system for the liquid crystal, the authors presented a rigorous derivation of Ericksen-Leslie system with general Ericksen stress and Leslie stress by using the Hilbert expansion method.

While in \cite{wang-zhang-zhang3}, the authors presented a rigorous derivation of the Ericksen-Leslie equation directly from the Doi-Onsager equation, the molecule model.

{\bf Now we are in a position to state our main results.}

Firs of all, we explain the assumptions and notations used throughout this paper.

\noindent{\bf Notations:}

\noindent
(1) For $p\ge 1$, denote by $L^p=L^p(\mathbb{R}^3)$ the $L^p$ space with the
norm $\|\cdot\|_{L^p}$. For $k\ge 1$ and $p\ge 1$, denote by $W^{k,p}=W^{k,p}(\mathbb{R}^3)$ the Sobolev space whose norm is $\|\cdot\|_{W^{k,p}}$, $H^k=W^{k,2}(\mathbb{R}^3)$.

\noindent
(2) $\mathcal{V}=$ closure of $\{u\in C^\infty_0(\mathbb{R}^3,\mathbb{R}^3): {\rm div}u=0\}$ in $L^2(\mathbb{R}^3,\mathbb{R}^3)$, $\mathcal{H}=$ closure of $\{u\in C^\infty_0(\mathbb{R}^3,\mathbb{R}^3): {\rm div}u=0\}$ in $H^1(\mathbb{R}^3,\mathbb{R}^3)$. Denote by $\langle\cdot,\cdot\rangle$ the inner product on $L^2$ space.

\noindent
(3) Denote by $|Q|\doteq \sqrt{{\rm tr}(Q^2)}=\sqrt{Q_{\alpha\beta}Q_{\alpha\beta}}$ the Frobenius norm of a matrix $Q$. For $Q_1,Q_2\in S_0^{(3)}$, denote $Q_1: Q_2={\rm tr}(Q_1Q_2)$. And $|\nabla Q|^2=\partial_\gamma Q_{\alpha\beta}\partial_\gamma Q_{\alpha\beta}=Q_{\alpha\beta,\gamma}Q_{\alpha\beta,\gamma}$. Finally, the divergence of a tensor is defined by $\nabla\cdot \sigma=\partial_j\sigma_{ij}$.

\begin{definition} A pair $(u,Q)$ is called a global weak solution of system (\ref{be}) with the initial conditions (\ref{ic}) where $u_0(x)\in \mathcal{V}, \ Q_0(x)\in H^1$, if $u\in L^\infty_{loc}([0,+\infty);\mathcal{V})\cap L^2_{loc}([0,+\infty);\mathcal{H})$, $Q\in L^\infty_{loc}([0,+\infty);H^1)\cap L^2_{loc}([0,+\infty);H^2)$, and furthermore, for any compactly supported $\varphi\in C^\infty([0,+\infty)\times \mathbb{R}^3;\mathbb{R}^3)$ with $\nabla\cdot\varphi=0$ and $\psi\in C^\infty([0,+\infty)\times \mathbb{R}^3;S_0^{3})$, one has
\begin{eqnarray*}
    &&\int_0^\infty\left(-\langle u,\varphi_t\rangle -\langle u\otimes u,\nabla\varphi\rangle +\mu\langle \nabla u,\nabla\varphi\rangle \right){\rm d}t-\langle u_0(x),\varphi(x,0)\rangle
    \\&=&
    \int_0^\infty\langle \frac{\partial \mathcal{F}_{LG}}{\partial \nabla Q}\odot \nabla Q-Q\cdot H_Q+H_Q\cdot Q,\nabla\varphi\rangle {\rm d}t,
\end{eqnarray*} and
\begin{eqnarray*}
    &&\int_0^\infty\left(-\langle Q,\psi_t\rangle -\langle Q\cdot u,\nabla\psi\rangle -\langle \Omega Q-Q\Omega,\psi\rangle \right){\rm d}t
    \\&=&\langle Q_0(x),\psi(x,0)\rangle +\Gamma\int_0^\infty\langle H_Q,\psi\rangle {\rm d}t.
\end{eqnarray*}
\end{definition}

Our first result is the following theorem about the global existence of weak solutions for problem (\ref{be})--(\ref{ic}).

\begin{theorem}\label{weak solutions} There exists a global weak solution $(u,Q)$ of system (\ref{be})--(\ref{ic}).
\end{theorem}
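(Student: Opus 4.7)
The plan is to construct weak solutions by a standard Galerkin-type approximation scheme, establish a uniform basic energy law together with an auxiliary $L^2$ in space-time $H^2$ bound on $Q$, and then pass to the limit using an Aubin--Lions type compactness argument. Concretely, I would start by introducing a retarded mollification of $u$ in the convective terms (or equivalently project onto finite-dimensional divergence-free subspaces), so the approximate system is a locally Lipschitz ODE in the velocity unknown coupled with a semilinear parabolic problem for $Q$ whose local-in-time well-posedness is straightforward. The ordering of the steps matters: I first fix existence of smooth approximations on a small interval, then extend globally using the uniform a priori bounds derived below.

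The heart of the matter is the basic energy law. Testing the momentum equation against $u$ and integrating by parts gives $\frac{1}{2}\frac{d}{dt}\|u\|_{L^2}^2+\mu\|\nabla u\|_{L^2}^2=\langle\sigma^a+\sigma^d,\nabla u\rangle$. Multiplying the $Q$-equation by $-H_Q$ and integrating gives $\frac{d}{dt}\mathcal{F}_{LG}[Q]+\Gamma\|H_Q\|_{L^2}^2=\langle u\cdot\nabla Q,H_Q\rangle-\langle \Omega Q-Q\Omega,H_Q\rangle$. The two key cancellations I would verify are
\begin{eqnarray*}
\langle u\cdot\nabla Q,H_Q\rangle=-\langle \sigma^d,\nabla u\rangle,\qquad \langle \Omega Q-Q\Omega,H_Q\rangle=-\langle \sigma^a,\nabla u\rangle,
\end{eqnarray*}
the first of which follows by integration by parts using the formula for $\sigma^d_{ij}=-(\partial \mathcal{F}_{LG}/\partial Q_{kl,j})Q_{kl,i}$, and the second because $\sigma^a=QH_Q-H_QQ$ is antisymmetric and pairs naturally against $\Omega$ while giving zero against $D$. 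Adding everything yields the dissipation identity $\frac{d}{dt}\bigl(\frac{1}{2}\|u\|_{L^2}^2+\mathcal{F}_{LG}[Q]\bigr)+\mu\|\nabla u\|_{L^2}^2+\Gamma\|H_Q\|_{L^2}^2=0$. The hypothesis (\ref{elasicity constant}), namely $L_1>0,\;L_2+L_3\ge 0,\;c>0$ and $L_5=0$, together with Young's inequality applied to the $L_4$ first-order term $\tfrac{L_4}{2}e_{lik}Q_{lj}Q_{ij,k}$, guarantees that $\mathcal{F}_{LG}$ dominates $\|\nabla Q\|_{L^2}^2$ modulo controllable $L^2$ terms and is bounded from below, so I obtain uniform bounds for $u$ in $L^\infty_t L^2_x\cap L^2_t H^1_x$ and $Q$ in $L^\infty_t H^1_x$ together with $H_Q\in L^2_{t,x}$.

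To upgrade this to $Q\in L^2_t H^2_x$, I would use the decomposition $H_Q=M_Q+E_Q+B_Q$ from the text and test $H_Q$ against $\Delta Q$ (or equivalently differentiate the identity $\langle H_Q,H_Q\rangle$). The principal symbol of $M_Q$ is $L_1\Delta+\frac{L_2+L_3}{2}(\nabla\otimes\mathrm{div}+\mathrm{div}\otimes\nabla-\frac{2}{3}\mathrm{tr}\cdot\mathrm{Id})$, and an integration by parts using $\mathrm{tr}\, Q=0$ together with the null-Lagrangian identity $Q_{ij,k}Q_{ik,j}=(\mathrm{div}\, Q)^2+\text{divergence}$ gives $\langle M_Q,-\Delta Q\rangle\gtrsim \|\nabla^2Q\|_{L^2}^2$. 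The terms $E_Q$ and $B_Q$ are absorbed by interpolation and Gagliardo--Nirenberg using the already established $H^1$ bound. This yields the sought $L^2_{\mathrm{loc}}([0,\infty);H^2)$ estimate.

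With these uniform bounds in hand, the remaining step is compactness and passage to the limit. Computing $u_t$ and $Q_t$ from the equations and using the above bounds shows $u_t\in L^{4/3}_t H^{-1}_x$ and $Q_t\in L^2_t L^2_x$, so Aubin--Lions yields strong convergence of $u$ in $L^2_{\mathrm{loc}}(L^2)$ and of $Q$ in $L^2_{\mathrm{loc}}(H^1)$, which suffices to pass to the limit in the bilinear terms $u\otimes u$, $u\cdot\nabla Q$ and $\Omega Q-Q\Omega$, as well as in the cubic bulk terms. The main obstacle I anticipate is the identification of the limit of the quasi-linear Ericksen-type stress $\sigma^d$ and of $Q\cdot H_Q-H_Q\cdot Q$: these contain products of first-order derivatives of $Q$ or a first-derivative against $H_Q$, so one needs strong $L^2$ convergence of $\nabla Q$ (not merely weak) in order to pair it against the only-weakly convergent $H_Q$. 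This is precisely what the $L^2_t H^2_x$ bound together with Aubin--Lions delivers, and it is the pivot that closes the argument.
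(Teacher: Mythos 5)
Your proposal is correct and follows essentially the same route as the paper: a mollified (Friedrichs/Leray-projected) approximation, the basic energy law obtained from the cancellations between the transport/rotation terms and the stresses $\sigma^d$, $\sigma^a$, recovery of the $L^2_tH^2_x$ bound on $Q$ from $\|H_Q\|_{L^2}$ using $L_1>0$ and $L_2+L_3\ge 0$, and Aubin--Lions compactness to pass to the limit. The only points where the paper is more careful are (i) the indefinite $L_4$ and bulk terms in the energy are controlled by adding a large multiple of the $\frac{d}{dt}\|Q\|_{L^2}^2$ identity before applying Gronwall, and (ii) the time derivatives are only estimated in a weak space $L^\infty_{loc}(H^{-k})$ rather than your claimed $Q_t\in L^2_tL^2_x$, which the available bounds do not quite give but which suffices equally well for compactness.
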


The second result is the following theorem concerning the global existence of strong solutions for problem (\ref{be})--(\ref{ic}) provided that the viscosity of the fluid is sufficiently large.

\begin{theorem}\label{th:1.1}
For any $(u_0,Q_0)\in \mathcal{H}\times H^2$, with the large viscosity assumption
\begin{eqnarray*}
      \mu\geq \mu_0(L,\Gamma,a,b,c,\|u_0\|_{H^1},\|Q_0\|_{H^2}),
\end{eqnarray*} problem (\ref{be})--(\ref{ic}) admits a global strong solution that satisfies
\begin{eqnarray*}
      u\in L^\infty(0,T;\mathcal{H})\cap L^2(0,T;H^2),\ Q\in L^\infty(0,T;H^2)\cap L^2(0,T;H^3).
\end{eqnarray*}
\end{theorem}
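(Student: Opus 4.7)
The plan is to keep the Galerkin approximation scheme from Theorem \ref{weak solutions} and upgrade the regularity by performing higher-order a priori estimates at the approximate level, using the large viscosity $\mu$ to close a Gronwall-type argument; passing to the limit will then be standard. I therefore focus on the a priori estimates, assuming the approximate solution is smooth.

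First, I would reuse the basic energy identity, obtained by testing the momentum equation with $u$ and the $Q$-equation with $H_Q$. Under (\ref{elasicity constant}) the cross terms $\langle \Omega Q-Q\Omega, H_Q\rangle$ and $\langle Q\cdot H_Q-H_Q\cdot Q,\nabla u\rangle$ cancel against each other and against $\langle\nabla\cdot\sigma^d,u\rangle+\langle u\cdot\nabla Q,H_Q\rangle$, giving
\begin{equation*}
\frac{d}{dt}\Bigl(\tfrac12\|u\|_{L^2}^2+\mathcal{F}_{LG}[Q]\Bigr)+\mu\|\nabla u\|_{L^2}^2+\Gamma\|H_Q\|_{L^2}^2=0,
\end{equation*}
and hence uniform bounds on $u$ in $L^\infty_t\mathcal{V}\cap L^2_t\mathcal{H}$ and on $Q$ in $L^\infty_tH^1\cap L^2_tH^2$ (after using the coercivity of $M_Q$ coming from $L_1>0$ and $L_2+L_3\ge 0$, plus the bulk-energy estimate for $f_{bulk}$).

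Next, to reach the $H^1\times H^2$ level I would test the momentum equation with $-\Delta u$ and the evolution equation for $\Delta Q$ (obtained by applying $\Delta$ to $(\ref{be})_2$) with $-\Delta H_Q$, or equivalently compute $\tfrac{d}{dt}\|\nabla u\|_{L^2}^2$ and $\tfrac{d}{dt}\|H_Q\|_{L^2}^2$. The principal dissipations produced are $\mu\|\Delta u\|_{L^2}^2$ and $\Gamma\|\nabla H_Q\|_{L^2}^2$; using the symmetric-traceless projection one recovers $\|\nabla^3 Q\|_{L^2}^2$ from $\|\nabla H_Q\|_{L^2}^2$ modulo lower-order terms. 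The dangerous remainders are the convection $u\cdot\nabla u$, the stress divergences $\nabla\cdot(\sigma^d+\sigma^a)$, and the coupling $\Omega Q-Q\Omega$; these are estimated through the three-dimensional Sobolev embeddings $H^1\hookrightarrow L^6$, $H^2\hookrightarrow L^\infty$ together with interpolation, so that they take the generic form
\begin{equation*}
C\bigl(1+\|\nabla u\|_{L^2}^2+\|\Delta Q\|_{L^2}^2\bigr)^{\alpha}\bigl(\|\Delta u\|_{L^2}^2+\|\nabla^3Q\|_{L^2}^2\bigr)+\text{(lower order)},
\end{equation*}
where $\alpha$ and $C$ depend on $L,\Gamma,a,b,c$ and on the basic-energy bounds determined by $\|u_0\|_{L^2}$ and $\|Q_0\|_{H^1}$.

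Summing the two estimates and choosing $\mu\ge\mu_0$ large enough, depending on the same parameters and on $\|u_0\|_{H^1}$ and $\|Q_0\|_{H^2}$, the quadratic-in-top-derivative term is absorbed on the left-hand side; what remains is a differential inequality to which Gronwall's lemma applies after one more use of the basic-energy integrability. This produces uniform-in-time control of $\|u\|_{H^1}+\|Q\|_{H^2}$ and integrability of $\|\Delta u\|_{L^2}^2+\|\nabla^3Q\|_{L^2}^2$ in time, i.e. exactly the regularity stated in Theorem \ref{th:1.1}. Compactness and a standard limit passage in the Galerkin scheme then deliver the strong solution.

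The main obstacle I expect is controlling $\nabla\cdot\sigma^d$ and $\nabla\cdot\sigma^a$ in the $\Delta u$-estimate without losing derivatives: $\sigma^d$ contains products $\nabla Q\otimes\nabla Q$ (and $L_4$-terms $Q\otimes\nabla Q$) whose divergence yields $\nabla Q\cdot\nabla^2 Q$, while $\sigma^a$ produces $\nabla Q\cdot H_Q+Q\cdot\nabla H_Q$, which must be paired with $\Delta u$ or $\nabla H_Q$ in a balanced way. The way out is to integrate by parts to distribute one derivative and to exploit the antisymmetry-symmetry cancellation $\langle Q\cdot H_Q-H_Q\cdot Q,\nabla\cdot(\cdot)\rangle$ when pairing stress and velocity derivatives (made much cleaner by $\xi=0$), so that only top-order norms already controlled by the dissipation appear. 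Making $\mu_0$ explicitly depend on $\|u_0\|_{H^1},\|Q_0\|_{H^2}$ requires a careful interpolation between the $L^\infty_tL^2_x$ bound from the basic energy and the $L^2_tH^1_x$ bound to ensure the Gronwall constant is finite; this is the delicate computation behind the statement.
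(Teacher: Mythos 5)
Your proposal follows essentially the same route as the paper: a higher-order energy estimate for $\mathcal{A}(t)=\|\nabla u\|_{L^2}^2+L_1\|\Delta Q\|_{L^2}^2+(L_2+L_3)\|\nabla\mathrm{div}\,Q\|_{L^2}^2$, the stress/transport cancellations you describe (the paper's $K_3+K_{6,1}+K_{6,2}=0$ and $K_{2,i}+K_{5,i}=0$), absorption of the top-order remainders by large $\mu$, and Gronwall; the only cosmetic difference is that the paper pairs with the second-order elastic part $M_Q$ rather than the full $H_Q$, treating $E_Q$ and $B_Q$ as perturbations. The one step you gloss over is the crux of the argument: the remainder has the form $C_1\mu^{1/2}\widetilde{\mathcal{A}}(t)\|\Delta u\|_{L^2}^2+C_2\mu^{-1/4}\widetilde{\mathcal{A}}(t)\|\nabla\Delta Q\|_{L^2}^2$, so the absorption condition involves the unknown $\widetilde{\mathcal{A}}(t)$ itself and cannot be arranged once and for all from the initial data without circularity. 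The paper breaks this with a continuation/contradiction argument: from the basic energy law one has $\int_t^{t+1}\widetilde{\mathcal{A}}\,{\rm d}s\le M$, hence in each unit interval there is a time $t_*$ with $\widetilde{\mathcal{A}}(t_*)\le 2M$, and combining this with $\frac{d}{dt}\widetilde{\mathcal{A}}\le C_3\widetilde{\mathcal{A}}$ (valid while absorption holds) shows $\widetilde{\mathcal{A}}$ can never reach the threshold $\mu^{1/2}/C_1$ if $\mu_0$ is chosen in terms of $\widetilde{\mathcal{A}}(0)$ and $M$. Your phrase ``one more use of the basic-energy integrability'' points in the right direction, but this bootstrap needs to be made explicit for the proof to close.
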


Finally, we present a continuous dependence result on the initial data, from which one can infer the weak-strong uniqueness of solutions to problem (\ref{be})--(\ref{ic}).

\begin{theorem}\label{th:1.2}
Suppose that $(u_i,Q_i)$ are global solutions to problem (\ref{be})--(\ref{ic}) corresponding to initial data $(u_{0i},Q_{0i})$, $i=1,2$, respectively. In addition, assume that for any positive $T$, it holds
\begin{eqnarray*}
      &&\|u_1\|_{L^\infty(0,T;L^2)}+\|Q_1\|_{L^\infty(0,T;H^1)}\leq \kappa_1,
      \\
      &&\|u_2\|_{L^\infty(0,T;H^1)}+\|Q_2\|_{L^\infty(0,T;H^2)}\leq \kappa_2,
\end{eqnarray*} for any $t\in[0,T]$. Then we have
\begin{eqnarray*}
      &&\|\delta u\|_{L^2}^2(t)+\|\delta Q\|_{H^1}^2(t)+\int_0^t\left(\mu\|\nabla\delta u\|^2(\tau)+\Gamma L_1^2\|\Delta\delta Q\|^2(\tau)\right){\rm d}s
      \\&\leq& Ce^{Ct}\left(\|\delta u_0\|_{L^2}^2+\|\delta Q_0\|_{H^1}^2\right),
\end{eqnarray*} where $\delta f=f_1-f_2$ and $C$ is a generic constant depending on $\kappa_1$ and $\kappa_2$ but not on $t$.
\end{theorem}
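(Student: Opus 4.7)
The strategy is a direct energy estimate for the difference $(\delta u,\delta Q):=(u_1-u_2,\,Q_1-Q_2)$ followed by Gr\"onwall's inequality. Subtracting the two copies of (\ref{be}) yields
\begin{eqnarray*}
\partial_t\delta u+u_1\cdot\nabla\delta u+\delta u\cdot\nabla u_2+\nabla\delta P-\mu\Delta\delta u&=&\nabla\cdot(\delta\sigma^d+\delta\sigma^a),
\\
\partial_t\delta Q+u_1\cdot\nabla\delta Q+\delta u\cdot\nabla Q_2-(\Omega_1\delta Q-\delta Q\,\Omega_1)-(\delta\Omega\, Q_2-Q_2\,\delta\Omega)&=&\Gamma(H_{Q_1}-H_{Q_2}),
\end{eqnarray*}
where every nonlinear difference is written as $\delta u$, $\delta Q$ or $\nabla\delta Q$ times a coefficient depending on $(u_i,Q_i,\nabla Q_i)$: the elastic part of $H_{Q_1}-H_{Q_2}$ is linear in $\delta Q$ with constant coefficients, while the bulk piece $B_{Q_1}-B_{Q_2}$ factors as $\delta Q$ times a quadratic polynomial in $(Q_1,Q_2)$.

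I would then pair the momentum equation with $\delta u$ and the tensor equation simultaneously with $\delta Q$ and with $-L_1\Delta\delta Q$, and add. The good left-hand side collects $\tfrac12\tfrac{d}{dt}(\|\delta u\|^2+\|\delta Q\|_{H^1}^2)+\mu\|\nabla\delta u\|^2+\Gamma L_1\|\nabla\delta Q\|^2+\Gamma L_1^2\|\Delta\delta Q\|^2$, together with a non-negative quadratic form $\Gamma L_1(L_2+L_3)\|\nabla(\nabla\!\cdot\!\delta Q)\|^2$ from the $(L_2+L_3)$-piece of $H_Q$ thanks to assumption (\ref{elasicity constant}); the $L_4$-piece is first-order in $\nabla\delta Q$ and is absorbed by Young. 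The divergence-free conditions $\nabla\cdot u_i=0$ kill the convective self-pairings and reduce $\langle u_1\cdot\nabla\delta Q,\Delta\delta Q\rangle$ to $-\int\partial_k u_1^j\,\partial_j\delta Q\cdot\partial_k\delta Q$, which is harmless using $\nabla u_1\in L^2_tL^2_x$ from the weak energy identity. All other cross-terms (convective errors such as $\langle\delta u\cdot\nabla u_2,\delta u\rangle$, the stress/rotation errors from $\delta\sigma^{d,a}$ paired against $\nabla\delta u$ and from $[\delta\Omega,Q_2]$ paired against $\delta Q-\Delta\delta Q$, and the cubic bulk differences) are estimated by H\"older, the 3D Sobolev embedding $H^1\hookrightarrow L^6$, Ladyzhenskaya-type interpolations, and the a priori bounds $\kappa_1,\kappa_2$. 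A representative estimate reads
\[
|\langle\delta u\cdot\nabla u_2,\delta u\rangle|=|\langle u_2\otimes\delta u,\nabla\delta u\rangle|\le C\|u_2\|_{H^1}\|\delta u\|^{1/2}\|\nabla\delta u\|^{3/2}\le\tfrac{\mu}{8}\|\nabla\delta u\|^2+C(\kappa_2,\mu)\|\delta u\|^2.
\]
After absorbing all $\|\nabla\delta u\|^2$ and $\|\Delta\delta Q\|^2$ contributions into half of the left-hand dissipation, one obtains a differential inequality
\[
\tfrac{d}{dt}\bigl(\|\delta u\|^2+\|\delta Q\|_{H^1}^2\bigr)+\tfrac{\mu}{2}\|\nabla\delta u\|^2+\tfrac{\Gamma L_1^2}{2}\|\Delta\delta Q\|^2\le \Lambda(t)\bigl(\|\delta u\|^2+\|\delta Q\|_{H^1}^2\bigr),
\]
with $\Lambda\in L^1_{\Loc}(0,T)$ depending only on $\kappa_1,\kappa_2$; Gr\"onwall's inequality then delivers the announced estimate, from which weak-strong uniqueness is immediate.

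The main obstacle is algebraic. The elastic stress $\sigma^d$ and the anisotropic stress $\sigma^a=QH_Q-H_QQ$ produce, after differencing and pairing with $\nabla\delta u$, genuinely top-order terms of the form $\nabla Q_1\otimes\nabla\delta Q:\nabla\delta u$ and $Q_1\,\Delta\delta Q\cdot\nabla\delta u$, which cannot be controlled at face value since $Q_1$ has only $H^1$ regularity. These dangerous pieces must either cancel against the corresponding top-order contributions produced by $(\Omega_1\delta Q-\delta Q\,\Omega_1)$ and $(\delta\Omega\, Q_2-Q_2\,\delta\Omega)$ in the $Q$-equation when tested against $-L_1\Delta\delta Q$, or else be rewritten via integration by parts so that all derivatives land on the strong component $Q_2\in H^2$. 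Carrying out this cancellation systematically for each of the $L_1$, $L_2+L_3$ and $L_4$ contributions to $H_Q$ and $\sigma^d$, while respecting the $S_0^{(3)}$-constraint on $\delta Q$ that makes $H_{\delta Q}$ symmetric and traceless, is the technical heart of the argument; once accomplished, the remaining bookkeeping is routine.
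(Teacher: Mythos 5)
Your overall strategy --- an energy estimate for $(\delta u,\delta Q)$ closed by Gr\"onwall --- is the same as the paper's, but two steps that you declare harmless or defer are exactly where your version breaks down, and the paper's proof differs from yours at precisely those points. First, the convection term: you keep $u_1\cdot\nabla\delta Q$ and claim that $\langle u_1\cdot\nabla\delta Q,\Delta\delta Q\rangle=-\int\partial_k u_1^j\,\partial_j\delta Q\,\partial_k\delta Q$ is ``harmless using $\nabla u_1\in L^2_tL^2_x$.'' It is not: the best available bound is $\|\nabla u_1\|_{L^2}\|\nabla\delta Q\|_{L^4}^2\le C\|\nabla u_1\|_{L^2}\|\nabla\delta Q\|_{L^2}^{1/2}\|\Delta\delta Q\|_{L^2}^{3/2}$, and Young's inequality then produces a Gr\"onwall coefficient of order $\|\nabla u_1(t)\|_{L^2}^{4}$, which need not be in $L^1_t$ when all one knows is $\nabla u_1\in L^2_tL^2_x$ (the theorem assumes only $u_1\in L^\infty_tL^2$). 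The paper never lets the weak solution act as a coefficient at top order: it writes $u_1=u_2+\delta u$, so the transport splits into $\delta u\cdot\nabla\delta Q$, whose pairing with the multiplier cancels \emph{exactly} against the self-interaction elastic stress $\frac{\partial\mathcal{F}_{LG}[\delta Q]}{\partial\nabla\delta Q}\odot\nabla\delta Q$ paired with $\nabla\delta u$ (the identity $J_3+J_4+J_7=0$ from the basic energy law), and $u_2\cdot\nabla\delta Q$, which is controlled with the time-independent constant $\kappa_2$ via $u_2\in L^\infty_tH^1\hookrightarrow L^\infty_tL^6$.

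Second, the multiplier. You test the $Q$-equation with $\delta Q-L_1\Delta\delta Q$ and assert that the top-order commutators from $\sigma^a$ cancel against $[\delta\Omega,Q_2]$ and $[\Omega_1,\delta Q]$ ``when tested against $-L_1\Delta\delta Q$.'' That cancellation (the analogue of $J_5+J_6=0$) holds only when the multiplier is the \emph{same} object that sits inside the stress, namely the full elastic molecular field $M_{\delta Q}+E_{\delta Q}$, not just its $L_1\Delta\delta Q$ part. With your multiplier the $(L_2+L_3)$-piece leaves an uncancelled term of size $\|Q_2\|_{L^\infty}\|\nabla^2\delta Q\|_{L^2}\|\nabla\delta u\|_{L^2}\sim\kappa_2\|\Delta\delta Q\|_{L^2}\|\nabla\delta u\|_{L^2}$, which can be absorbed into the dissipation only under a smallness condition relating $\kappa_2$, $\mu$, $\Gamma$ and $L_1$ --- none of which is assumed. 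This is why the paper multiplies the $Q$-equation by $M_{\delta Q}+E_{\delta Q}+K\delta Q$, so that the time derivative produces the elastic energy of $\delta Q$ rather than $\|\delta Q\|_{H^1}^2$ (with $K$ large to restore coercivity against the indefinite $L_4$ term), and then recovers $\frac{L_1^2}{2}\|\Delta\delta Q\|_{L^2}^2-C\|\nabla\delta Q\|_{L^2}^2$ from the dissipation $\Gamma\|M_{\delta Q}+E_{\delta Q}\|_{L^2}^2$ a posteriori. Your plan works as written essentially only in the special case $L_2+L_3=0$; for the general coefficients treated here the choice of multiplier is not bookkeeping but the step that makes the estimate closable.
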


\setcounter{section}{1} \setcounter{equation}{0}
\section{Existence of global weak solutions}
In this section, in order to prove the global existence of weak solutions, we first establish some a priori estimates.

\begin{lemma}
\label{le:1} {\rm(Basic energy equality)} For any $0\le t<T$, there holds
\begin{eqnarray}
    \label{ie:4.0}
    \frac{d}{dt}E(t)+\mu\int_{\mathbb{R}^3}|\nabla u|^2{\rm d}x+\Gamma\int_{\mathbb{R}^3}{\rm tr}(H_Q^2){\rm d}x=0.
\end{eqnarray}
where $E(t)$ is a Lyapunov functional of system (\ref{model}) and defined as follows,
\begin{eqnarray*}
    E(t)\doteq\int_{\mathbb{R}^3}\left(\frac12|u|^2+\frac{L_1}{2}|\nabla Q|^2+\frac{L_2+L_3}{2}|{\rm div}Q|^2+\frac{L_4}{2}e_{l\alpha k}Q_{l\beta}Q_{\alpha\beta,k}+f_{bulk}(Q)\right)(\cdot,t){\rm d}x.
\end{eqnarray*} Furthermore, there holds for any $t\geq 0$,
\begin{eqnarray}
    \label{basic estimate}
    \|u(\cdot,t)\|_{L^2}^2+L_1\|Q(\cdot,t)\|_{H^1}^2+\mu\int_0^t\|\nabla u(\cdot,s)\|_{L^2}^2{\rm d}s+\Gamma L_1^2\int_0^t\|\Delta Q(\cdot,s)\|_{L^2}^2{\rm d}s\leq Ce^{Ct}
\end{eqnarray} with some uniform constants $C$ depending on $a,b,c,\mu,\Gamma,L_i$ and the initial data.
\end{lemma}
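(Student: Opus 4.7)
The plan is to derive the energy equality by the standard formal energy method: test the momentum equation against $u$ and take the (Frobenius) inner product of the $Q$-equation with $H_Q$, then add. After handling the pressure and convective terms via $\nabla\cdot u=0$, the viscous stress contributes $\mu\int|\nabla u|^2$, the $\Gamma H_Q$ term contributes $\Gamma\int\mathrm{tr}(H_Q^2)$, and the remaining four cross terms (the distortion stress $\sigma^d$ against $\nabla u$, the convection $u\cdot\nabla Q$ against $H_Q$, the antisymmetric stress $\sigma^a$ against $\nabla u$, and the $\Omega Q-Q\Omega$ term against $H_Q$) must be shown to assemble, modulo $\frac{d}{dt}\mathcal{F}_{LG}[Q]$, into zero. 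Note that the simplification $\xi=0$ kills $F(Q,\cdot)$ in both equations.

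The cancellations go as follows. By the chain rule and integration by parts, $\int\mathrm{tr}\bigl((u\cdot\nabla Q)H_Q\bigr)$ rewrites as $-\int u_k\,\partial_k f_{bulk} + \int\sigma^d_{ij}\,\partial_j u_i$ up to boundary terms (using $(H_Q)_{ij}=-\delta\mathcal{L}_{LG}/\delta Q_{ij}$ and the explicit form of $\sigma^d_{ij}=-\partial\mathcal{F}_{LG}/\partial Q_{kl,j}\,Q_{kl,i}$); the first piece contributes $\frac{d}{dt}\int f_{bulk}(Q)$ after using $\nabla\cdot u=0$, and the second exactly cancels the $\sigma^d$ contribution from the momentum equation. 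Similarly, one verifies by direct computation that $\int\sigma^a:\nabla u=\int\mathrm{tr}\bigl((\Omega Q-Q\Omega)H_Q\bigr)$, using the antisymmetry of $\Omega$ and the symmetry of $Q$ and $H_Q$ to move one factor across the trace. Together with $\frac{d}{dt}\int\bigl(\frac{L_1}2|\nabla Q|^2+\frac{L_2+L_3}2|\mathrm{div}\,Q|^2+\frac{L_4}2 e_{l\alpha k}Q_{l\beta}Q_{\alpha\beta,k}\bigr)=-\int (M_Q+E_Q):Q_t$ (the Lagrange-multiplier calculation already performed above (\ref{lambda})), these identities yield (\ref{ie:4.0}).

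To extract the estimate (\ref{basic estimate}), I would next show that $E(t)$ is coercive for the $H^1$ norm of $Q$. The term $\frac{L_4}2 e_{l\alpha k}Q_{l\beta}Q_{\alpha\beta,k}$ is controlled by Young's inequality $\leq \frac{L_1}4|\nabla Q|^2+C|Q|^2$; the assumption $L_2+L_3\geq 0$ keeps the $(\mathrm{div}\,Q)$ term non-negative; and $c>0$ ensures $f_{bulk}(Q)+C_0\geq \frac{c}{8}|Q|^4$ pointwise, which absorbs the stray $C|Q|^2$ via another Young's inequality. Coupled with a separate $L^2$ bound for $Q$ obtained by testing the $Q$-equation against $Q$ itself (the rotation terms drop since $\mathrm{tr}((\Omega Q-Q\Omega)Q)=0$, and $\int H_Q:Q$ is controlled using ellipticity and the inequality $a\,\mathrm{tr}(Q^2)-\frac{b}3\mathrm{tr}(Q^3)\leq \frac{c}2\mathrm{tr}(Q^2)^2+C$), Gronwall's inequality then gives the $Ce^{Ct}$ bound on $\|u\|_{L^2}^2+L_1\|Q\|_{H^1}^2$ and the time integrals of $\|\nabla u\|_{L^2}^2$ and $\|H_Q\|_{L^2}^2$.

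Finally, the $\Gamma L_1^2\int_0^t\|\Delta Q\|_{L^2}^2$ bound is recovered from the $H_Q$-dissipation by ellipticity: writing $H_Q=L_1\Delta Q+\frac{L_2+L_3}{2}\mathcal{S}(\nabla^2 Q)+L_4\mathcal{E}(\nabla Q)+B_Q$ with $\mathcal{S}$ the symmetric traceless symmetrization of second derivatives of $Q$, one squares, integrates, and uses $\int L_1\Delta Q:\mathcal{S}(\nabla^2 Q)=\int(|\nabla\mathrm{div}\,Q|^2+\cdots)\geq 0$ (integration by parts plus the sign condition $L_2+L_3\geq 0$) to obtain $\|H_Q\|_{L^2}^2\geq L_1^2\|\Delta Q\|_{L^2}^2-C(\|\nabla Q\|_{L^2}^2+\|Q\|_{L^2}^2+\|Q\|_{L^6}^6)$, the remainder being swallowed by the already-established $H^1$ bound and Sobolev embedding. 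The main technical obstacle is precisely this last step: controlling the cross terms that couple $L_1\Delta Q$ to the $L_4$ contribution in $H_Q$, which needs integration by parts against the Levi-Civita tensor and careful use of the $L_2+L_3\geq 0$ condition to preserve the sign.
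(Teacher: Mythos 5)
Your overall strategy coincides with the paper's: test the momentum equation against $u$ and the $Q$-equation against $H_Q$, verify the same three groups of cancellations (convection against the bulk part vanishes by incompressibility; convection against the elastic part of $H_Q$ cancels the distortion stress $\sigma^d$ tested against $\nabla u$, with the $L_4$ pieces cancelling among themselves; the rotation terms cancel the antisymmetric stress $\sigma^a$), then obtain a separate $L^2$ bound by testing the $Q$-equation against $Q$, and finally recover $\Gamma L_1^2\int_0^t\|\Delta Q\|_{L^2}^2$ from $\int_0^t\|H_Q\|_{L^2}^2$ by expanding the square and using $L_2+L_3\ge 0$ to keep the cross term $2L_1(L_2+L_3)\|\nabla\,\mathrm{div}\,Q\|_{L^2}^2$ nonnegative. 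All of this matches the paper's proof of Lemma 2.1.

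There is, however, one step that fails as written because the problem is posed on the whole space $\mathbb{R}^3$. You establish coercivity of $E(t)$ via the pointwise bound $f_{bulk}(Q)+C_0\ge \tfrac{c}{8}|Q|^4$ and then absorb the stray $C|Q|^2$ "via another Young's inequality" $C|Q|^2\le \varepsilon|Q|^4+C_\varepsilon$. Both steps introduce additive constants, and $\int_{\mathbb{R}^3}C_0\,{\rm d}x=\infty$, so these inequalities cannot be integrated over $\mathbb{R}^3$. The paper avoids this by using a \emph{constant-free} pointwise inequality: for $K$ sufficiently large (depending only on $a,b,c$), $\tfrac{K}{2}{\rm tr}(Q^2)+\tfrac{c}{8}{\rm tr}^2(Q^2)\le (K+\tfrac{a}{2}){\rm tr}(Q^2)-\tfrac{b}{3}{\rm tr}(Q^3)+\tfrac{c}{4}{\rm tr}^2(Q^2)$, which holds with no additive constant because, writing $t={\rm tr}(Q^2)$, the deficit $\tfrac{Cb}{3}t^{3/2}$ is dominated by $\tfrac{K}{2}t+\tfrac{c}{8}t^2$ uniformly in $t\ge 0$. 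One then adds $2(K+C)$ times the identity for $\tfrac12\tfrac{d}{dt}\|Q\|_{L^2}^2$ to the energy equality before integrating in time. Since you already propose the $L^2$ test of the $Q$-equation, you have the needed ingredient; the gap is that your coercivity argument must be rerun with the $K|Q|^2$ term folded in pointwise rather than through constants, otherwise the estimate (\ref{basic estimate}) does not follow on an unbounded domain.
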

\begin{proof} We first multiply (\ref{model})$_1$ by $u$, integrate over $\mathbb{R}^3$ and apply integration by parts, and then sum with (\ref{model})$_2$ multiplied by $H_Q$, taken trace, integrated over $\mathbb{R}^3$ and applied integration by parts. Also note the following fact:
\begin{eqnarray}
    \nonumber
    &&\int_{\mathbb{R}^3}{\rm tr}(Q_t\cdot H_Q){\rm d}x
    \\&=&\nonumber
    -\frac{d}{dt}\int_{\mathbb{R}^3}\left(\frac{L_1}{2}|\nabla Q|^2+\frac{L_2+L_3}{2}|{\rm div}Q|^2+\frac{a}{2}{\rm tr}(Q^2)-\frac{b}{3}{\rm tr}(Q^3)+\frac{c}{4}{\rm tr}^2(Q^2)\right){\rm d}x
    \\&& \nonumber
    \underbrace{+L_4\int_{\mathbb{R}^3}Q_{\alpha\beta,t}e_{l\alpha k}Q_{l\beta,k}{\rm d}x}_{J_1}
    \\&=&
    -\frac{d}{dt}\int_{\mathbb{R}^3}\left(\frac{L_1}{2}|\nabla Q|^2+\frac{L_2+L_3}{2}|{\rm div}Q|^2+\frac{L_4}{2}e_{l\alpha k}Q_{l\beta}Q_{\alpha\beta,k}+f_{bulk}(Q)\right){\rm d}x,
\end{eqnarray}
where
\begin{eqnarray}
    \nonumber
    J_1&=&L_4\frac{d}{dt}\int_{\mathbb{R}^3}Q_{\alpha\beta}e_{l\alpha k}Q_{l\beta,k}{\rm d}x-L_4\int_{\mathbb{R}^3}Q_{\alpha\beta}e_{l\alpha k}Q_{l\beta,kt}{\rm d}x
    \\&=& \nonumber
    L_4\frac{d}{dt}\int_{\mathbb{R}^3}Q_{\alpha\beta}e_{l\alpha k}Q_{l\beta,k}{\rm d}x+L_4\int_{\mathbb{R}^3}Q_{\alpha\beta,k}e_{l\alpha k}Q_{l\beta,t}{\rm d}x
    \\&=&
    L_4\frac{d}{dt}\int_{\mathbb{R}^3}e_{\alpha lk}Q_{l\beta}Q_{\alpha\beta,k}{\rm d}x+L_4\int_{\mathbb{R}^3}Q_{l\beta,k}e_{\alpha lk}Q_{\alpha\beta,t}{\rm d}x,
\end{eqnarray} that is,
\begin{eqnarray}
    J_1=L_4\int_{\mathbb{R}^3}Q_{\alpha\beta,t}e_{l\alpha k}Q_{l\beta,k}{\rm d}x=-\frac{L_4}{2}\frac{d}{dt}\int_{\mathbb{R}^3}e_{l\alpha k}Q_{l \beta}Q_{\alpha\beta,k}{\rm d}x.
\end{eqnarray}
Then we have
\begin{eqnarray*}
    &&\frac{d}{dt}E(t)+\mu\int_{\mathbb{R}^3}|\nabla u|^2{\rm d}x+\Gamma\int_{\mathbb{R}^3}{\rm tr}(H_Q^2){\rm d}x
    \\&=&
    \underbrace{\int_{\mathbb{R}^3}u_\gamma Q_{\alpha\beta,\gamma}\left(-aQ_{\alpha\beta}+b\left[Q_{\alpha\delta} Q_{\delta\beta}-\frac13{\rm tr}(Q^2)\delta_{\alpha\beta}\right]-cQ_{\alpha\beta}{\rm tr}(Q^2)\right){\rm d}x}_{J_2}
    \\&&
    +\underbrace{\int_{\mathbb{R}^3}u_\gamma Q_{\alpha\beta,\gamma}\left[L_1\Delta Q_{\alpha\beta}+\frac12(L_2+L_3)\left(Q_{\alpha \delta,\delta\beta}+Q_{\beta \delta,\delta\alpha}-\frac23\delta_{\alpha\beta}Q_{kp,kp}\right)\right]{\rm d}x}_{J_3}
    \\&&
    \underbrace{+\frac{L_4}{2}\int_{\mathbb{R}^3}u_\gamma Q_{\alpha\beta,\gamma}\left(e_{l\alpha k}Q_{l\beta,k}+e_{l\beta k}Q_{l\alpha,k}-\frac23\delta_{\alpha\beta}e_{lpk}Q_{lp,k}\right){\rm d}x}_{J_4}
    \\&&
    \underbrace{+\frac{L_4}{2}\int_{\mathbb{R}^3}(-\Omega_{\alpha \gamma}Q_{\gamma \beta}+Q_{\alpha \gamma}\Omega_{\gamma \beta})(H_Q)_{\alpha\beta}{\rm d}x}_{J_5}
    \underbrace{-\int_{\mathbb{R}^3}[Q_{\alpha\gamma}(H_Q)_{\gamma\beta}-(H_Q)_{\alpha\gamma}Q_{\gamma\beta}]u_{\alpha,\beta}{\rm d}x}_{J_6}
    \\&&
    \underbrace{+\int_{\mathbb{R}^3}\left(L_1Q_{\gamma\delta,\alpha}Q_{\gamma\delta,\beta} +L_2Q_{\gamma\delta,\delta}Q_{\gamma\beta,\alpha}+L_3Q_{\gamma\beta,\delta}Q_{\gamma\delta,\alpha}
    +\frac{L_4}{2}e_{\gamma\delta\beta}Q_{\gamma m}Q_{\delta m,\alpha}\right)u_{\alpha,\beta}{\rm d}x}_{J_7}.
\end{eqnarray*}

Next, we estimate the terms $J_2, \cdots, J_7$ step by step. First of all, it follows from the incompressibility condition, the symmetry of the $Q$-tensor and integration by parts that
\begin{eqnarray}
    J_2=0.
\end{eqnarray}

On the other hand, using integration by parts and the incompressibility condition, we have
\begin{eqnarray}
    \nonumber
    &&J_3+J_7
    \\&=& \nonumber
    -L_1\int_{\mathbb{R}^3}u_{\gamma,\delta}Q_{\alpha\beta,\gamma}Q_{\alpha\beta,\delta}{\rm d}x-L_2\int_{\mathbb{R}^3}u_{\gamma,\beta} Q_{\alpha\beta,\gamma}Q_{\alpha\delta,\delta}{\rm d}x-L_3\int_{\mathbb{R}^3}u_{\gamma,\delta}Q_{\alpha\beta,\gamma}Q_{\alpha\delta,\beta}{\rm d}x
    \\&& \nonumber
    \underbrace{-L_3\int_{\mathbb{R}^3}u_{\gamma}Q_{\alpha\beta,\gamma\delta}Q_{\alpha\delta,\beta}{\rm d}x}_{J_{8}}+L_1\int_{\mathbb{R}^3}Q_{\gamma\delta,\alpha}Q_{\gamma\delta,\beta}u_{\alpha,\beta}{\rm d}x+L_2\int_{\mathbb{R}^3}Q_{\gamma\delta,\delta}Q_{\gamma\beta,\alpha}u_{\alpha,\beta}{\rm d}x
    \\&&\nonumber
    +L_3\int_{\mathbb{R}^3}Q_{\gamma\beta,\delta}Q_{\gamma\delta,\alpha}u_{\alpha,\beta}{\rm d}x
    +\frac{L_4}{2}\int_{\mathbb{R}^3}e_{\gamma\delta\beta}Q_{\gamma m}Q_{\delta m,\alpha}u_{\alpha,\beta}{\rm d}x
    \\&=& \nonumber
    \frac{L_4}{2}\int_{\mathbb{R}^3}e_{\gamma\delta\beta}Q_{\gamma m}Q_{\delta m,\alpha}u_{\alpha,\beta}{\rm d}x
    \\&=& \nonumber
    -\frac{L_4}{2}\int_{\mathbb{R}^3}e_{\gamma\delta\beta}Q_{\gamma m,\beta}Q_{\delta m,\alpha}u_{\alpha}{\rm d}x-\frac{L_4}{2}\int_{\mathbb{R}^3}e_{\gamma\delta\beta}Q_{\gamma m}Q_{\delta m,\alpha\beta}u_{\alpha}{\rm d}x
    \\&=& \nonumber
    -\frac{L_4}{2}\int_{\mathbb{R}^3}e_{\gamma\delta\beta}Q_{\gamma m,\beta}Q_{\delta m,\alpha}u_{\alpha}{\rm d}x+\frac{L_4}{2}\int_{\mathbb{R}^3}e_{\gamma\delta\beta}Q_{\gamma m,\alpha}Q_{\delta m,\beta}u_{\alpha}{\rm d}x
    \\&=&
    -L_4\int_{\mathbb{R}^3}e_{\gamma\delta\beta}Q_{\gamma m,\beta}Q_{\delta m,\alpha}u_{\alpha}{\rm d}x,
\end{eqnarray} where we have used $J_8=0$. In fact
\begin{eqnarray}
    J_{8}=-L_3\int_{\mathbb{R}^3}u_{\gamma}Q_{\alpha\beta,\gamma\delta}Q_{\alpha\delta,\beta}{\rm d}x=L_3\int_{\mathbb{R}^3}u_{\gamma}Q_{\alpha\beta,\delta}Q_{\alpha\delta,\beta\gamma}{\rm d}x,
\end{eqnarray} which yields $J_{8}=0$.

Then we easily obtain
\begin{eqnarray}
    J_3+J_4+J_7=0.
\end{eqnarray}

It is not difficult to get $J_5+J_6=0$. In fact, this follows from the direct calculations as follows
\begin{eqnarray}
    \nonumber
    J_5&=&\int_{\mathbb{R}^3}\left(\frac{-u_{\alpha,\gamma}+u_{\gamma,\alpha}}{2}Q_{\gamma\beta} +Q_{\alpha\gamma}\frac{u_{\gamma,\beta}-u_{\beta,\gamma}}{2}\right)(H_Q)_{\alpha\beta}{\rm d}x
    \\&=&
    -\int_{\mathbb{R}^3}u_{\alpha,\gamma}Q_{\gamma\beta}(H_Q)_{\alpha\beta}{\rm d}x+\int_{\mathbb{R}^3}u_{\gamma,\alpha}Q_{\gamma\beta}(H_Q)_{\alpha\beta}{\rm d}x=-J_6.
\end{eqnarray}

In conclusion, we have
\begin{eqnarray}
    \label{basic estiamte 2}
    \frac{d}{dt}E(t)+
    \mu\int_{\mathbb{R}^3}|\nabla u|^2{\rm d}x+\Gamma\int_{\mathbb{R}^3}{\rm tr}(H_Q^2){\rm d}x=0.
\end{eqnarray}

In order to obtain the dissipation estimate (\ref{basic estimate}) from (\ref{basic estiamte 2}), we need to estimate the following term contained in $\frac{d}{dt}E(t)$ (see the definition of $E(t)$):
\begin{eqnarray*}
    \frac{d}{dt}\int_{\mathbb{R}^3}\left(\frac{L_4}{2}e_{l\alpha k}Q_{l\beta}Q_{\alpha\beta,k}+f_{bulk}(Q)\right)(\cdot,t){\rm d}x.
\end{eqnarray*}

Multiplying (\ref{model})$_2$ by $Q$, taking the trace, integrating over $\mathbb{R}^3$ and applying integration by parts, we have
\begin{eqnarray}
    \nonumber
    \label{Q^2}
    &&\frac12\frac{d}{dt}\int_{\mathbb{R}^3}|Q|^2{\rm d}x
    \\&=& \nonumber
    \underbrace{\int_{\mathbb{R}^3}{\rm tr}[(\Omega\cdot Q-Q\cdot\Omega)\cdot Q]{\rm d}x}_{J_9}
    -\Gamma L_1\int_{\mathbb{R}^3}|\nabla Q|^2{\rm d}x-\Gamma(L_2+L_3)\int_{\mathbb{R}^3}|{\rm div}Q|^2{\rm d}x
    \\&& \nonumber
    -\Gamma a\int_{\mathbb{R}^3}|Q|^2{\rm d}x
    +\Gamma b\int_{\mathbb{R}^3}{\rm tr}(Q^3){\rm d}x-\Gamma c\int_{\mathbb{R}^3}|Q|^4{\rm d}x
    \\&\leq&
    -\Gamma L_1\int_{\mathbb{R}^3}|\nabla Q|^2{\rm d}x-\Gamma(L_2+L_3)\int_{\mathbb{R}^3}|{\rm div}Q|^2{\rm d}x+C\int_{\mathbb{R}^3}(|Q|^2+|Q|^4){\rm d}x,
\end{eqnarray}
where we have used the fact $J_9=0$.

On the other hand, there exists a sufficiently large constant $K>0$ depending only on $a,b,c$, such that (see \cite{Paicu1,Paicu2})
\begin{eqnarray}
    \frac{K}{2}{\rm tr}(Q^2)+\frac{c}{8}{\rm tr}^2(Q^2)\leq \left(K+\frac{a}{2}\right){\rm tr}(Q^2)-\frac{b}{3}{\rm tr}(Q^3)+\frac{c}{4}{\rm tr}^2(Q^2).
\end{eqnarray}

Meanwhile, it is clear that there exists a constant $C=C(L_1,L_4)>0$ such that
\begin{eqnarray}
    \int_{\mathbb{R}^3}\frac{L_4}{2}e_{l\alpha k}Q_{l\beta}Q_{\alpha\beta,k}{\rm d}x\geq -\frac{L_1}{4}\|\nabla Q\|_{L^2}^2-C\|Q\|_{L^2}^2.
\end{eqnarray}

Multiplying (\ref{Q^2}) by $2(K+C)$, adding the resulting inequality to (\ref{basic estiamte 2}), integrating over $[0,t]$, and then using (2.13) and (2.14), one obtains
\begin{eqnarray}
    \label{gronwall}
    \nonumber
    && \frac12\|u\|_{L^2}^2+\frac{L_1}{4}\|\nabla Q\|_{L^2}^2+\frac{K}{2}\|Q\|_{L^2}^2+\frac{c}{8}\|Q\|_{L^4}^4+\mu\int_0^t\|\nabla u\|_{L^2}^2{\rm d}s+\Gamma\int_0^t\int_{\mathbb{R}^3}{\rm tr}(H_Q^2){\rm d}x
    \\&\leq& \nonumber
    E(t)+(K+C)\|Q\|_{L^2}^2+\mu\int_0^t\|\nabla u\|_{L^2}^2{\rm d}s+\Gamma\int_0^t\int_{\mathbb{R}^3}{\rm tr}(H_Q^2){\rm d}x
    \\&\leq&
    E(0)+(K+C)\|Q_0\|_{L^2}^2
    +C\int_0^t\left(\|\nabla Q\|_{L^2}^2+\|Q\|_{L^2}^2+\|Q\|_{L^4}^4\right){\rm d}s.
\end{eqnarray}

Finally, we only need to estimate the last term on the left hand side of (\ref{gronwall}).  Note that
\begin{eqnarray}
    \nonumber
    \Gamma\int_{\mathbb{R}^3}{\rm tr}(H_Q^2){\rm d}x&=&\Gamma L_1^2\|\Delta Q\|_{L^2}^2+\Gamma\int_{\mathbb{R}^3}{\rm tr}(H_Q-L_1\Delta Q)^2{\rm d}x
    \\&&
    +2\Gamma L_1\int_{\mathbb{R}^3} \Delta Q:(H_Q-L_1\Delta Q){\rm d}x,
\end{eqnarray}
in which
\begin{eqnarray}
    \label{2ab}
    \nonumber
    &&2\Gamma L_1\int_{\mathbb{R}^3} \Delta Q:(H_Q-L_1\Delta Q){\rm d}x
    \\&=&\nonumber
    2\Gamma L_1(L_2+L_3)\|\nabla {\rm div}Q\|_{L^2}^2+\Gamma L_1L_4\int_{\mathbb{R}^3}e_{l\alpha k}Q_{l\beta,k}\Delta Q_{\alpha\beta}{\rm d}x+2a\Gamma L_1\|\nabla Q\|_{L^2}^2
    \\&&\nonumber
    +2b\Gamma L_1\int_{\mathbb{R}^3}\Delta Q_{\alpha\beta}Q_{\alpha\gamma}Q_{\gamma\beta}{\rm d}x-2c\Gamma L_1\int_{\mathbb{R}^3}\Delta Q_{\alpha\beta}Q_{\alpha\beta}{\rm tr}(Q^2){\rm d}x
    \\&=&\nonumber
    2\Gamma L_1(L_2+L_3)\|\nabla {\rm div}Q\|_{L^2}^2+\Gamma L_1L_4\int_{\mathbb{R}^3}e_{l\alpha k}Q_{l\beta,k}\Delta Q_{\alpha\beta}{\rm d}x+2a\Gamma L_1\|\nabla Q\|_{L^2}^2
    \\&&\nonumber
    +2b\Gamma L_1\int_{\mathbb{R}^3}\Delta Q_{\alpha\beta}Q_{\alpha\gamma}Q_{\gamma\beta}{\rm d}x\underbrace{+2c\Gamma L_1\int_{\mathbb{R}^3}|\nabla Q|^2{\rm tr}(Q^2){\rm d}x
    +2c\Gamma L_1\int_{\mathbb{R}^3}|\nabla({\rm tr}(Q^2))|^2{\rm d}x}_{\geq0}
    \\&\geq&
    2\Gamma L_1(L_2+L_3)\|\nabla {\rm div}Q\|_{L^2}^2-\frac12\Gamma L_1^2\|\Delta Q\|_{L^2}^2-C(\|\nabla Q\|_{L^2}^2+\|Q\|_{L^4}^4).
\end{eqnarray}

Hence, (\ref{basic estimate}) follows directly from (\ref{gronwall})-(\ref{2ab}) and the Gronwall inequality.
\end{proof}

Now we turn to construct a global weak solution. The main idea is similar to \cite{Paicu2}. Therefore, we only give a sketch of the proof.

First, we define the mollifying operator
\begin{eqnarray*}
    \widehat{J_nf}(\xi)=1_{[\frac1n,n]}\widehat{f}(\xi),
\end{eqnarray*} where we denote by $\widehat{f}(\xi)$ the Fourier transformation of $f(x)$. Denote by $\mathcal{P}$ the Leray projector into divergence free vector fields. Then we consider the following system:
\begin{eqnarray} \label{mollify}
    \begin{cases}
      u^n_t+\mathcal{P}J_n(\mathcal{P}J_nu^n\nabla\mathcal{P}J_n u^n)=\mu\Delta\mathcal{P}J_nu^n-\mathcal{P}\nabla\cdot J_n[J_nQ^{(n)}\widetilde{H}^{(n)}_Q-\widetilde{H}^{(n)}_QJ_nQ^{(n)}+(\widetilde{\sigma}^d)^{(n)}],
      \\
      Q^{(n)}_t+J_n(\mathcal{P}J_nu^n\nabla J_nQ^{(n)})=\Gamma \widetilde{H}_Q^{(n)}+J_n\left(\mathcal{P}J_n\Omega^n J_nQ^{(n)}\right)-J_n\left(J_nQ^{(n)}\mathcal{P}J_n\Omega^n\right),
\end{cases}
\end{eqnarray}
where
\begin{eqnarray*}
    (\widetilde{H}^{(n)}_Q)_{ij}&\doteq& L_1\Delta J_nQ^{(n)}_{ij}+\frac{L_4}{2}\left(e_{lik}J_nQ^{(n)}_{lj,k}+e_{ljk}J_nQ^{(n)}_{li,k}-\frac23\delta_{ij}e_{lpk}J_nQ_{lp,k}^{(n)}\right)
    \\&&
    +\frac12(L_2+L_3)\left((J_nQ^{(n)})_{ik,kj}+(J_nQ^{(n)})_{jk,ki}-\frac23\delta_{ij}(J_nQ^{(n)})_{kp,kp}\right)
    \\&&
    -aJ_nQ^{(n)}_{ij}+bJ_n\left[J_nQ^{(n)}_{ik}J_nQ^{(n)}_{kj}-\frac{{\rm tr}(J_nQ^{(n)})^2}{3}\delta_{ij}\right]-cJ_n\left(J_nQ^{(n)}_{ij}|J_nQ^{(n)}|^2\right),
\end{eqnarray*} and
\begin{eqnarray*}
    -(\widetilde{\sigma}^d)^{(n)}_{ij}&=&
    L_1J_n(J_nQ^{(n)}_{kl,i}J_nQ^{(n)}_{kl,j})+L_2J_n(J_nQ^{(n)}_{km,m}J_nQ^{(n)}_{kj,i})
    \\&&
    +L_3J_n(J_nQ^{(n)}_{kj,l}J_nQ^{(n)}_{kl,i}) +\frac{L_4}{2}e_{mkj}J_n(J_nQ^{(n)}_{ml}J_nQ_{kl,i}^{(n)}).
\end{eqnarray*}

The above system can be considered as an ordinary differential equation in $L^2$ satisfying the conditions of the Cauchy-Lipschitz theorem. Therefore, it admits a unique local solution $(u^n,Q^{(n)})\in C^1([0,T_n);L^2\times L^2(\mathbb{R}^3;\mathbb{R}^9))$.

\begin{remark} \cite{Paicu1,Paicu2} The operators $J_n$, $\mathcal{P}$ and $\mathcal{P}J_n$ are idempotent. Furthermore, $J_n$ and $\mathcal{P}$ are selfadjoint in $L^2$. $J_n$ commute with distributional derivatives.
\end{remark}

From this remark, we conclude that the pair $(\mathcal{P}J_nu^n,J_nQ^{(n)})$ is also a solution of system (\ref{mollify}). Then we know that $(u^n,Q^{(n)})=(\mathcal{P}J_nu^n,J_nQ^{(n)})\in C^1([0,T_n),H^\infty)$ solves the following system:
\begin{eqnarray} \label{mollify2}
    \begin{cases}
      u^n_t+\mathcal{P}J_n(u^n\nabla u^n)=\mu\Delta u^n-\mathcal{P}\nabla\cdot J_n[Q^{(n)}H^{(n)}_Q-H^{(n)}_QQ^{(n)}+(\sigma^d)^{(n)}],
      \\
      Q^{(n)}_t+J_n(u^n\nabla Q^{(n)})=\Gamma H_Q^{(n)}+J_n\left(\Omega^n Q^{(n)}\right)-J_n\left(Q^{(n)}\Omega^n\right),
\end{cases}
\end{eqnarray}
in which
\begin{eqnarray*}
    (H^{(n)}_Q)_{ij}&\doteq& L_1\Delta Q^{(n)}_{ij}+\frac{L_4}{2}\left(e_{lik}Q^{(n)}_{lj,k}+e_{ljk}Q^{(n)}_{li,k}-\frac23\delta_{ij}e_{lpk}Q_{lp,k}^{(n)}\right)
    \\&&
    +\frac12(L_1+L_2)\left((Q^{(n)})_{ik,kj}+(Q^{(n)})_{jk,ki}-\frac23\delta_{ij}(Q^{(n)})_{kp,kp}\right)
    \\&&
    -aQ^{(n)}_{ij}+bJ_n\left[Q^{(n)}_{ik}Q^{(n)}_{kj}-\frac{{\rm tr}(Q^{(n)})^2}{3}\delta_{ij}\right]-cJ_n\left(Q^{(n)}_{ij}|Q^{(n)}|^2\right),
\end{eqnarray*} and
\begin{eqnarray*}
    -(\sigma^d)^{(n)}_{ij}&=&
    L_1J_n(Q^{(n)}_{kl,i}Q^{(n)}_{kl,j})+L_2J_n(Q^{(n)}_{km,m}Q^{(n)}_{kj,i})
    \\&&
    +L_3J_n(Q^{(n)}_{kj,l}Q^{(n)}_{kl,i})+\frac{L_4}{2}e_{mkj}J_n(Q^{(n)}_{ml}Q^{(n)}_{kl,j}).
\end{eqnarray*}

The estimates for the sequence $\{(u^n,Q^{(n)})\}$ are exactly the same as above. Therefore, we get that for any given positive $T$,
\begin{eqnarray*}
    &&\sup_n \|u^n\|_{L^\infty(0,T;L^2)\cap L^2(0,T;H^1)}<\infty,\\
    &&\sup_n \|Q^{(n)}\|_{L^\infty(0,T;H^1)\cap L^2(0,T;H^2)}<\infty.
\end{eqnarray*}

On the other hand, we can get the bounds on $(\partial_t u^n,\partial_t Q^{(n)})$ in some $L^\infty_{loc}(H^{-k})$ for some large $k$ from the equations. Then by Aubin-Lions compactness lemma, after taking possible subsequences, we may obtain
\begin{eqnarray*}
    &&u^n\rightharpoonup u\ {\rm weakly*}\ {\rm in}\ L^\infty(0,T;L^2),\  u^n\rightharpoonup u\ {\rm weakly}\ {\rm in}\ L^2(0,T;H^1),
    \\&&
    u^n\rightarrow u\ {\rm strongly}\ {\rm in}\ L^2(0,T;H^{1-\varepsilon}_{loc}),\ \forall \varepsilon>0,
    \\&&
    u^n(t)\rightharpoonup u(t)\ {\rm weakly}\ {\rm in}\ L^2\ {\rm for}\ {\rm all}\ t>0;
    \\&&
    Q^{(n)}\rightharpoonup Q\ {\rm weakly*}\ {\rm in}\ L^\infty(0,T;H^1),\ Q^{(n)}\rightharpoonup Q\ {\rm weakly}\ {\rm in}\ L^2(0,T;H^2),
    \\&&
    Q^{(n)}\rightarrow Q\ {\rm strongly}\ {\rm in}\ L^2(0,T;H^{2-\varepsilon}_{loc}),\ \forall \varepsilon>0,
    \\&&
    Q^{(n)}(t)\rightharpoonup Q(t)\ {\rm weakly}\ {\rm in}\ H^1\ {\rm for}\ {\rm all}\ t>0,
\end{eqnarray*}
which is enough for us to pass to the limit in the weak solutions of (\ref{mollify2}) and Theorem \ref{weak solutions} follows from some diagonal arguments. \qed

\setcounter{section}{2} \setcounter{equation}{0}
\section{Global existence of strong solutions}
In this section, we intend to establish the higher regularity of the global weak solutions with sufficiently large viscosity of the fluid by using the energy argument shown in \cite{Sun-Liu,Wu-Xu-Liu1,Wu-Xu-Liu2}. In order to finish the energy estimates, we need the following well-known lemma for Cauchy problem in dimension three.
\begin{lemma}\label{le:3.3}(Gagliardo-Nirenberg inequality) For $p\in [2,6]$, $q\in (1,\infty)$, and $r\in (3,\infty)$, there exists some generic constant $C>0$ which may depend on $q$, $r$ such that for $f\in H^1$ and $g\in L^q\cap W^{1,r}$, we have
\begin{eqnarray}
    \|f\|_{L^p}^p\leq C\|f\|_{L^2}^{\frac{6-p}{2}}\|\nabla f\|_{L^2}^{\frac{3p-6}{2}},
\end{eqnarray}
and
\begin{eqnarray}
    \|g\|_{C(\mathbb{R}^3)}\leq C\|g\|_{L^q}^{\frac{q(r-3)}{3r+q(r-3)}}\|\nabla g\|_{L^r}^{\frac{3r}{3r+q(r-3)}}.
\end{eqnarray}
\end{lemma}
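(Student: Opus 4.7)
This is a classical Gagliardo--Nirenberg inequality in $\mathbb{R}^{3}$, so the plan is to deduce both parts from two well-known Sobolev embeddings: $H^{1}(\mathbb{R}^{3})\hookrightarrow L^{6}(\mathbb{R}^{3})$ with some constant $C_{S}$, and the Morrey embedding $W^{1,r}(\mathbb{R}^{3})\hookrightarrow C^{0,1-3/r}(\mathbb{R}^{3})$, which is available since $r>3$. On top of these, a straightforward H\"older interpolation handles the first bound, while a localization plus scaling/optimization argument pins down the exponents in the second.

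For the first inequality, since $p\in[2,6]$ I would write $\frac{1}{p}=\frac{\theta}{2}+\frac{1-\theta}{6}$, solve to get $\theta=\frac{6-p}{2p}$, and apply H\"older to obtain
\[
\|f\|_{L^{p}}\le \|f\|_{L^{2}}^{\theta}\,\|f\|_{L^{6}}^{1-\theta}.
\]
Then the Sobolev embedding yields $\|f\|_{L^{6}}\le C_{S}\|\nabla f\|_{L^{2}}$, and raising the resulting estimate to the $p$-th power converts the exponents into $\theta p=\frac{6-p}{2}$ and $(1-\theta)p=\frac{3p-6}{2}$, matching the stated bound exactly.

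For the second inequality, the cleanest route is to localize Morrey's inequality on a ball $B_{\rho}(x_{0})$ of radius $\rho>0$: one gets H\"older-continuity control of the oscillation by $C\rho^{1-3/r}\|\nabla g\|_{L^{r}}$, while the average of $|g|$ on $B_{\rho}$ is dominated by $C\rho^{-3/q}\|g\|_{L^{q}}$ by H\"older. Combining yields
\[
|g(x_{0})|\le C\bigl(\rho^{-3/q}\|g\|_{L^{q}}+\rho^{1-3/r}\|\nabla g\|_{L^{r}}\bigr).
\]
Taking the supremum over $x_{0}$ and optimizing in $\rho$ by balancing the two terms fixes $\rho$ as a power of $\|g\|_{L^{q}}/\|\nabla g\|_{L^{r}}$ and produces exactly the exponents $\alpha=\frac{q(r-3)}{3r+q(r-3)}$ on $\|g\|_{L^{q}}$ and $1-\alpha=\frac{3r}{3r+q(r-3)}$ on $\|\nabla g\|_{L^{r}}$.

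The main obstacle is purely algebraic: checking that the exponents produced by optimization coincide with those in the statement. A quick consistency check is the scaling $g_{\lambda}(x)=g(\lambda x)$, under which $\|g_{\lambda}\|_{C}=\|g\|_{C}$, $\|g_{\lambda}\|_{L^{q}}=\lambda^{-3/q}\|g\|_{L^{q}}$ and $\|\nabla g_{\lambda}\|_{L^{r}}=\lambda^{1-3/r}\|\nabla g\|_{L^{r}}$, which forces the homogeneity relation $-\frac{3\alpha}{q}+(1-\alpha)\bigl(1-\frac{3}{r}\bigr)=0$; its unique solution is precisely the claimed $\alpha$, confirming the exponents and completing the proof.
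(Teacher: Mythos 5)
Your proof is correct. Note that the paper itself offers no proof of this lemma --- it is invoked as a ``well-known lemma for Cauchy problem in dimension three'' and used as a black box --- so there is no argument of the authors' to compare against; your derivation is the standard one and fills that gap. For the first inequality, Lebesgue interpolation with $\theta=\frac{6-p}{2p}$ followed by the Sobolev embedding $H^1(\mathbb{R}^3)\hookrightarrow L^6(\mathbb{R}^3)$ gives, after raising to the $p$-th power, exactly the exponents $\theta p=\frac{6-p}{2}$ and $(1-\theta)p=\frac{3p-6}{2}$. For the second, the localized Morrey oscillation bound $|g(x_0)-\bar g_{B_\rho}|\le C\rho^{1-3/r}\|\nabla g\|_{L^r}$ together with the H\"older estimate $|\bar g_{B_\rho}|\le C\rho^{-3/q}\|g\|_{L^q}$, optimized over $\rho$, yields $\alpha=\frac{q(r-3)}{3r+q(r-3)}$ since $1-\tfrac{3}{r}+\tfrac{3}{q}=\frac{q(r-3)+3r}{qr}$; your scaling check confirms this is the unique exponent consistent with dimensional homogeneity. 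The only (trivial) caveat worth recording is the degenerate case where $\|\nabla g\|_{L^r}=0$ or $\|g\|_{L^q}=0$, in which case $g\equiv 0$ on $\mathbb{R}^3$ and the inequality is vacuous, so the balancing of the two terms is always legitimate.
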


Let
$$\mathcal{A}(t)=\|\nabla u\|_{L^2}^2(t)+L_1\|\Delta Q\|_{L^2}^2(t)+(L_2+L_3)\|\nabla{\rm div}Q\|_{L^2}^2(t),$$
and $\widetilde{\mathcal{A}}(t)=\mathcal{A}(t)+1$. Then we have the following lemma, which will directly yield the higher order estimates with sufficiently large viscosity.
\begin{lemma}
\label{le:2} For any given $T>0$, there holds in $[0, T)$ the following estimates
\begin{eqnarray}
    \label{est2}
    \frac{d}{dt}\widetilde{\mathcal{A}}(t)+\left(\mu-C_1\mu^{\frac12}\widetilde{\mathcal{A}}(t)\right)\|\Delta u\|_{L^2}^2
    +\left(\frac{\Gamma L_1^2}{2}-C_2\mu^{-\frac14}\widetilde{\mathcal{A}}(t)\right)\|\nabla\Delta Q\|_{L^2}^2\leq C_3\widetilde{\mathcal{A}}(t),
\end{eqnarray} where $C_i$ $(i=1,2,3)$ are constants depending on $\|u_0\|_{L^2}$, $\|Q_0\|_{H^1}$ and $\mu_0$ which is the positive lower bound of $\mu$.
\end{lemma}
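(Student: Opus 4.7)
The plan is to derive a higher-order energy identity by testing each equation with a well-chosen Laplacian, and then to convert it into the claimed differential inequality via the Gagliardo--Nirenberg interpolations of Lemma \ref{le:3.3} together with weighted Young's inequalities. Recall that $\xi=0$, so $F\equiv 0$ and $\sigma^s=\mu D$. Multiplying the momentum equation by $-\Delta u$ and using $\nabla\cdot u=0$ to eliminate the pressure yields $\tfrac12\tfrac{d}{dt}\|\nabla u\|_{L^2}^2+\mu\|\Delta u\|_{L^2}^2$ on the left, with contributions from $u\cdot\nabla u$ and $\nabla\cdot(\sigma^a+\sigma^d)$ on the right. Testing the $Q$-equation with $\Delta H_Q$, using $\int Q_t:\Delta H_Q=-\int\nabla Q_t:\nabla H_Q$ together with the symmetry and tracelessness of $Q_t$ exactly as in Lemma \ref{le:1}, the kinetic side unfolds as $\tfrac{L_1}{2}\tfrac{d}{dt}\|\Delta Q\|_{L^2}^2+\tfrac{L_2+L_3}{2}\tfrac{d}{dt}\|\nabla{\rm div}Q\|_{L^2}^2+\tfrac{d}{dt}\mathcal{G}(t)$, where $\mathcal{G}$ collects the $L_4$ null-Lagrangian and the bulk $(a,b,c)$ contributions and satisfies $|\mathcal{G}(t)|\leq C\widetilde{\mathcal{A}}(t)$; the dissipative part yields $\Gamma\int H_Q:\Delta H_Q=-\Gamma\|\nabla H_Q\|_{L^2}^2$.

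The crucial structural step is the lower bound $\Gamma\|\nabla H_Q\|_{L^2}^2\geq\tfrac{\Gamma L_1^2}{2}\|\nabla\Delta Q\|_{L^2}^2-C\widetilde{\mathcal{A}}(t)$. I would prove this by splitting $H_Q=L_1\Delta Q+R$ and expanding: the main cross term $2L_1\int\nabla\Delta Q:\nabla R^{(L_2+L_3)}$ collapses, after $Q$-symmetry and two integrations by parts, to $2L_1(L_2+L_3)\|\Delta{\rm div}Q\|_{L^2}^2\geq 0$ by the hypothesis $L_2+L_3\geq 0$, while the remaining $L_4$ and bulk pieces in $\nabla R$ are absorbed by Young's inequality into $\tfrac{L_1^2}{2}\|\nabla\Delta Q\|_{L^2}^2$ plus terms of order $\widetilde{\mathcal{A}}$.

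Summing the two identities and absorbing $\tfrac{d}{dt}\mathcal{G}$ by enlarging the Lyapunov, the remaining task is to estimate the convective terms $\int(u\cdot\nabla u)\cdot\Delta u$ and $\int(u\cdot\nabla Q):\Delta H_Q$, the stress $\int\nabla\cdot(\sigma^a+\sigma^d)\cdot\Delta u$, and the commutator $\int(\Omega Q-Q\Omega):\Delta H_Q$. Each is controlled by H\"older together with the interpolations $\|u\|_{L^\infty}\lesssim\|\nabla u\|_{L^2}^{1/2}\|\Delta u\|_{L^2}^{1/2}$, $\|\nabla Q\|_{L^\infty}\lesssim\|\Delta Q\|_{L^2}^{1/2}\|\nabla\Delta Q\|_{L^2}^{1/2}$, and $\|\nabla u\|_{L^3}\lesssim\|\nabla u\|_{L^2}^{1/2}\|\Delta u\|_{L^2}^{1/2}$ from Lemma \ref{le:3.3}. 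A representative cubic remainder takes the form $C\|\nabla u\|_{L^2}^{1/2}\|\Delta u\|_{L^2}^{1/2}\|\Delta Q\|_{L^2}\|\nabla\Delta Q\|_{L^2}$; applying Young's inequality with weight $\mu^{-1/4}\widetilde{\mathcal{A}}$ on the $\|\nabla\Delta Q\|_{L^2}^2$ factor and weight $\mu^{1/2}\widetilde{\mathcal{A}}$ on the $\|\Delta u\|_{L^2}^2$ factor (with the leftover Gr\"onwall-type slack of size $C\widetilde{\mathcal{A}}$) reproduces exactly the coefficients stated in (\ref{est2}).

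The main obstacle will be the $\sigma^a$/vorticity pairing. In Lemma \ref{le:1}, $\int\nabla\cdot\sigma^a\cdot u$ and $\int(\Omega Q-Q\Omega):H_Q$ cancelled exactly via $J_5+J_6=0$. At the higher-regularity level of $-\Delta u$/$\Delta H_Q$ the same structural cancellation survives only up to commutators of $\Delta$ with the nonlinear terms, and keeping those residual commutator pieces within the $\mu^{1/2}\widetilde{\mathcal{A}}\|\Delta u\|_{L^2}^2+\mu^{-1/4}\widetilde{\mathcal{A}}\|\nabla\Delta Q\|_{L^2}^2+C\widetilde{\mathcal{A}}$ budget, without ever producing a $\widetilde{\mathcal{A}}^2$ error, is the most delicate part of the bookkeeping.
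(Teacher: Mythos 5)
Your overall strategy -- testing the momentum equation with $-\Delta u$, testing the $Q$-equation with a Laplacian of the molecular field, relying on the structural cancellations between the distortion stress $\sigma^d$ and the convection term and between $\sigma^a$ and the rotation term, and closing with Gagliardo--Nirenberg plus $\mu$-weighted Young inequalities -- is the same as the paper's. The one substantive deviation is your choice of multiplier: you test with $\Delta H_Q$, whereas the paper pairs $\partial_t\Delta Q$ with only the leading second-order linear part $M_Q=L_1\Delta Q+\tfrac{L_2+L_3}{2}(Q_{ik,kj}+Q_{jk,ki}-\tfrac23\delta_{ij}Q_{kp,kp})$, so that the time-derivative side is \emph{exactly} $\tfrac12\tfrac{d}{dt}\bigl(L_1\|\Delta Q\|_{L^2}^2+(L_2+L_3)\|\nabla\mathrm{div}\,Q\|_{L^2}^2\bigr)$ and the dissipation is $-\Gamma\|\nabla M_Q\|_{L^2}^2\le -\Gamma L_1^2\|\nabla\Delta Q\|_{L^2}^2-2\Gamma L_1(L_2+L_3)\|\Delta\mathrm{div}\,Q\|_{L^2}^2$, with $E_Q$ and $B_Q$ relegated to source terms ($K_8$, $K_9$) that are simply estimated.

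Your version has a concrete gap at the step where you claim the kinetic side "unfolds" as $\tfrac{L_1}{2}\tfrac{d}{dt}\|\Delta Q\|_{L^2}^2+\tfrac{L_2+L_3}{2}\tfrac{d}{dt}\|\nabla\mathrm{div}\,Q\|_{L^2}^2+\tfrac{d}{dt}\mathcal{G}(t)$. For the $L_4$ piece this can be arranged by the antisymmetry trick (the analogue of $J_1$ in Lemma 2.1, one derivative up), but for the cubic bulk terms it fails: for instance
\begin{equation*}
-b\int_{\mathbb{R}^3}\nabla Q_t:\nabla(Q^2)\,{\rm d}x=-b\frac{d}{dt}\int_{\mathbb{R}^3}Q_{\alpha\beta,k}Q_{\alpha\gamma,k}Q_{\gamma\beta}\,{\rm d}x+b\int_{\mathbb{R}^3}Q_{\alpha\beta,k}Q_{\alpha\gamma,k}Q_{\gamma\beta,t}\,{\rm d}x,
\end{equation*}
and the leftover term carries $Q_t$, which must be replaced from the equation and then estimated -- it is not part of any $\tfrac{d}{dt}\mathcal{G}$. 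Moreover, even if you collect an exact derivative $\tfrac{d}{dt}\mathcal{G}$, "absorbing it by enlarging the Lyapunov" produces a differential inequality for $\widetilde{\mathcal{A}}+\mathcal{G}$ rather than for $\widetilde{\mathcal{A}}$ itself; since the continuation argument after Lemma 3.2 uses the inequality (3.3) with the specific functional $\widetilde{\mathcal{A}}$ (and its integrability from the basic energy law), you would need to convert back using $|\mathcal{G}|\le C\widetilde{\mathcal{A}}$, which perturbs the coefficients in front of $\|\Delta u\|_{L^2}^2$ and $\|\nabla\Delta Q\|_{L^2}^2$ and requires extra care. Both issues disappear if you adopt the paper's multiplier $M_Q$; the rest of your plan (the $K_3+K_{6,1}+K_{6,2}=0$ and $K_{2,i}+K_{5,i}=0$ cancellations, and the interpolation bookkeeping yielding the $\mu^{1/2}\widetilde{\mathcal{A}}\|\Delta u\|_{L^2}^2+\mu^{-1/4}\widetilde{\mathcal{A}}\|\nabla\Delta Q\|_{L^2}^2+C\widetilde{\mathcal{A}}$ budget) then goes through as you describe.
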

\begin{proof} It follows from direct calculation and integration by parts that
\begin{eqnarray}
    \nonumber
    &&\frac12\frac{d}{dt}\widetilde{\mathcal{A}}(t)
    \\&=& \nonumber
    -\langle \Delta u,\partial_t u\rangle +L_1\langle \Delta Q_{\alpha\beta},\partial_t\Delta Q_{\alpha\beta}\rangle +(L_2+L_3)\langle Q_{\alpha\delta,\delta\gamma},\partial_tQ_{\alpha\beta,\beta\gamma}\rangle
    \\&=& \nonumber
    -\langle \Delta u,\partial_t u\rangle +\langle L_1\Delta Q_{\alpha\beta}+(L_2+L_3)Q_{\alpha\delta,\delta\beta},\partial_t\Delta Q_{\alpha\beta}\rangle
    \\&=& \nonumber
    -\langle \Delta u,\partial_t u\rangle +\langle M_Q,\partial_t\Delta Q\rangle
    \\&=&\nonumber
    \underbrace{\langle \Delta u_\alpha,u_\gamma\partial_\gamma u_\alpha\rangle }_{K_1}-\mu\|\Delta u\|_{L^2}^2
    \\&& \nonumber
    \underbrace{+\langle \Delta u_\alpha,\partial_\beta(L_1Q_{\gamma\delta,\alpha}Q_{\gamma\delta,\beta}+L_2Q_{\gamma\delta,\delta}Q_{\gamma\beta,\alpha} +L_3Q_{\gamma\beta,\delta}Q_{\gamma\delta,\alpha}+\frac{L_4}{2}e_{m \gamma\beta}Q_{m\delta}Q_{\gamma\delta,\alpha})\rangle }_{K_2}
    \\&&\nonumber
    \underbrace{-\langle \Delta u_\alpha,\partial_\beta(Q_{\alpha\gamma}(M_Q)_{\gamma\beta}-(M_Q)_{\alpha\gamma}Q_{\gamma\beta})\rangle }_{K_3}
    \\&& \nonumber
    \underbrace{-\langle \Delta u_\alpha,\partial_\beta(Q_{\alpha\gamma}(E_Q)_{\gamma\beta}
    -(E_Q)_{\alpha\gamma}Q_{\gamma\beta})\rangle }_{K_4}
    \\&& \nonumber
    \underbrace{-\langle L_1\Delta Q_{\alpha\beta}+(L_2+L_3)Q_{\alpha\delta,\delta\beta},\Delta(u_\gamma\partial_\gamma Q_{\alpha\beta})\rangle }_{K_5}
    \\&&\nonumber
    +\underbrace{\langle (M_Q)_{\alpha\beta},\Delta(\Omega_{\alpha\gamma} Q_{\gamma\beta})-\Delta(Q_{\alpha\gamma}\Omega_{\gamma\beta})\rangle }_{K_6}\underbrace{+
    \Gamma\langle (M_Q)_{\alpha\beta},\Delta(M_Q)_{\alpha\beta}\rangle }_{K_7}
    \\&&
    \underbrace{+\Gamma \langle (M_Q)_{\alpha\beta},\Delta(E_Q)_{\alpha\beta}\rangle }_{K_8}
    \underbrace{+\Gamma \langle (M_Q)_{\alpha\beta},\Delta((B_Q)_{\alpha\beta})\rangle }_{K_{9}},
\end{eqnarray} where we have used $Q_{\alpha\gamma}(B_Q)_{\gamma\beta}-(B_Q)_{\alpha\gamma}Q_{\gamma\beta}=0$.

Note first that
\begin{eqnarray}
    \nonumber K_7&=& -\Gamma\|\nabla M_Q\|_{L^2}^2
    \\&=& \nonumber
    -\Gamma L_1^2\|\nabla\Delta Q\|_{L^2}^2-\Gamma L_1(L_2+L_3)\langle \nabla\Delta Q_{\alpha\beta},\nabla(Q_{\alpha\gamma,\gamma\beta}+Q_{\beta\gamma,\gamma\alpha}-\frac23\delta_{\alpha\beta}Q_{\gamma\delta,\gamma\delta})\rangle
    \\&& \nonumber
    -\Gamma\left(\frac{L_2+L_3}{2}\right)^2\|\nabla(Q_{\alpha\gamma,\gamma\beta}+Q_{\beta\gamma,\gamma\alpha}-\frac23\delta_{\alpha\beta}Q_{\gamma\delta,\gamma\delta})\|_{L^2}^2
    \\&\leq&
    -\Gamma L_1^2\|\nabla\Delta Q\|_{L^2}^2-2\Gamma L_1(L_2+L_3)\|\Delta{\rm div}Q\|_{L^2}^2.
\end{eqnarray}

Secondly, we need the following lemma.

\begin{lemma} (Key estimates) There holds
\begin{eqnarray}
    K_3+K_6
    \leq
    C_1\mu^{\frac12}\widetilde{\mathcal{A}}(t)\|\Delta u\|_{L^2}^2+C_2\mu^{-\frac14}\widetilde{\mathcal{A}}(t)\|\nabla\Delta Q\|_{L^2}^2
    +C_3(\mu^{-\frac54}+1)\|\Delta Q\|_{L^2}^2.
\end{eqnarray}
\end{lemma}
\begin{proof} From the symmetry and direct calculations, we have
\begin{eqnarray}
    \nonumber
    K_6&=& \nonumber
    \frac12\langle (M_Q)_{\alpha\beta},\Delta(u_{\alpha,\gamma}Q_{\gamma\beta})\rangle -\frac12\langle (M_Q)_{\alpha\beta},\Delta(u_{\gamma,\alpha}Q_{\gamma\beta})\rangle
    \\&&\nonumber
    -\frac12\langle (M_Q)_{\alpha\beta},\Delta(Q_{\alpha\gamma}u_{\gamma,\beta})\rangle +\frac{1}{2}\langle (M_Q)_{\alpha\beta},\Delta(Q_{\alpha\gamma }u_{\beta,\gamma})\rangle
    \\&=& \nonumber
    \langle (M_Q)_{\alpha\beta},\Delta(u_{\alpha,\gamma}Q_{\gamma\beta})\rangle -\langle (M_Q)_{\alpha\beta},\Delta(u_{\gamma,\alpha}Q_{\gamma\beta})\rangle
    \\&=&\nonumber
    \underbrace{\langle (M_Q)_{\alpha\beta},\Delta u_{\alpha,\gamma}Q_{\gamma\beta}\rangle }_{K_{6,1}}+\langle (M_Q)_{\alpha\beta},u_{\alpha,\gamma}\Delta Q_{\gamma\beta}\rangle
    +2\langle (M_Q)_{\alpha\beta}, u_{\alpha,\gamma\delta} Q_{\gamma\beta,\delta}\rangle
    \\&&\nonumber
    \underbrace{-\langle (M_Q)_{\alpha\beta},\Delta u_{\gamma,\alpha}Q_{\gamma\beta}\rangle }_{K_{6,2}}-\langle (M_Q)_{\alpha\beta},u_{\gamma,\alpha}\Delta Q_{\gamma\beta}\rangle
    -2\langle (M_Q)_{\alpha\beta},u_{\gamma,\alpha \delta}Q_{\gamma\beta,\delta}\rangle .
\end{eqnarray}
which yields that
\begin{eqnarray}
    K_3+K_{6,1}+K_{6,2}=0.
\end{eqnarray}

Then by the H\"older inequality, the Sobolev inequalities, Lemma \ref{le:1}, Lemma \ref{le:3.3} and the Cauchy inequality, we have
\begin{eqnarray}
    \nonumber
    &&K_3+K_6
    \\&\leq& \nonumber
    C\int_{\mathbb{R}^3}\left(|\nabla u||\nabla^2 Q|^2+|\nabla^2u||\nabla Q||\nabla^2 Q|\right){\rm d}x.
    \\&\leq&\nonumber
    C\|\nabla u\|_{L^6}\|\Delta Q\|_{L^2}\|\Delta Q\|_{L^3}+C\|\nabla^2u\|_{L^2}\|\Delta Q\|_{L^2}\|\nabla Q\|_{L^\infty}
    \\&\leq&\nonumber
    C\|\Delta u\|_{L^2}\|\Delta Q\|_{L^2}^{\frac32}\|\nabla\Delta Q\|_{L^2}^{\frac12}+C\|\nabla^2u\|_{L^2}\|\Delta Q\|_{L^2}\|\nabla\Delta Q\|_{L^2}^{\frac34}
    \\&\leq&\nonumber
    \mu^{\frac12}\|\Delta Q\|_{L^2}^2\|\Delta u\|_{L^2}^2+C\mu^{-\frac14}\|\nabla\Delta Q\|_{L^2}^2+C\mu^{-\frac34}\|\Delta Q\|_{L^2}^2
    \\&&\nonumber
    +\mu^{\frac12}\|\Delta u\|_{L^2}^2+C\mu^{-\frac14}\|\Delta Q\|_{L^2}^2\|\nabla\Delta Q\|_{L^2}^2+C\mu^{-\frac54}\|\Delta Q\|_{L^2}^2
    \\&\leq& \nonumber
    \mu^{\frac12}\widetilde{\mathcal{A}}(t)\|\Delta u\|_{L^2}^2+C\mu^{-\frac14}\widetilde{\mathcal{A}}(t)\|\nabla\Delta Q\|_{L^2}^2
    +C(\mu^{-\frac54}+\mu^{-\frac34})\|\Delta Q\|_{L^2}^2. \qed
\end{eqnarray}

Now we present the estimates about the rest terms. It follows from integration by parts that
\begin{eqnarray}
    \nonumber
    K_2&=&L_1\langle \Delta u_\alpha,Q_{\gamma\delta,\alpha}\Delta Q_{\gamma\delta}\rangle +L_2\langle \Delta u_\alpha,Q_{\gamma\delta,\delta\beta}Q_{\gamma\beta,\alpha}\rangle
    \\&& \nonumber
    +L_3\langle \Delta u_\alpha,Q_{\gamma\beta,\beta\delta}Q_{\gamma\delta,\alpha}+Q_{\gamma\beta,\delta}Q_{\gamma\delta,\alpha\beta}\rangle
    \\&& \nonumber
    +\frac{L_4}{2}\langle \Delta u_\alpha,e_{m\gamma\beta}Q_{m\delta,\beta}Q_{\gamma\delta,\alpha}\rangle +\frac{L_4}{2}\langle \Delta u_\alpha,e_{m\gamma\beta}Q_{m\delta}Q_{\gamma\delta,\alpha\beta}\rangle
    \\&=& \nonumber
    \underbrace{L_1\langle \Delta u_\alpha,Q_{\gamma\delta,\alpha}\Delta Q_{\gamma\delta}\rangle }_{K_{2,1}}+\underbrace{(L_2+L_3)\langle \Delta u_\alpha,Q_{\gamma\delta,\delta\beta}Q_{\gamma\beta,\alpha}\rangle }_{K_{2,2}}
    \\&& \nonumber
    +L_3\langle \Delta u_\alpha,Q_{\gamma\beta,\delta}Q_{\gamma\delta,\alpha\beta}\rangle +\frac{L_4}{2}\langle \Delta u_\alpha,e_{m\gamma\beta}Q_{m\delta,\beta}Q_{\gamma\delta,\alpha}\rangle
    \\&&
    +\frac{L_4}{2}\langle \Delta u_\alpha,e_{m\gamma\beta}Q_{m\delta}Q_{\gamma\delta,\alpha\beta}\rangle ,
\end{eqnarray} and
\begin{eqnarray}
    \nonumber
    K_5&=& \underbrace{-L_1\langle \Delta Q_{\alpha\beta},\Delta u_\gamma Q_{\alpha\beta,\gamma}\rangle }_{K_{5,1}}-2L_1\langle \Delta Q_{\alpha\beta},u_{\gamma,\delta}Q_{\alpha\beta,\gamma\delta}\rangle
    \\&& \nonumber
    \underbrace{-(L_2+L_3)\langle Q_{\alpha\delta,\delta\beta},\Delta u_\gamma Q_{\alpha\beta,\gamma}\rangle }_{K_{5,2}}-2(L_2+L_3)\langle Q_{\alpha\delta,\delta\beta},u_{\gamma,k}Q_{\alpha\beta,\gamma k}\rangle
    \\&&
    \underbrace{-(L_2+L_3)\langle Q_{\alpha\delta,\delta\beta},u_\gamma\Delta Q_{\alpha\beta,\gamma}\rangle }_{K_{5,3}}.
\end{eqnarray}
Note that $K_{2,1}+K_{5,1}=K_{2,2}+K_{5,2}=0$. Integrating by parts twice, one obtains
\begin{eqnarray}
    K_{5,3}=-(L_2+L_3)\langle Q_{\alpha\delta,\delta k},u_{\gamma,\beta}Q_{\alpha\beta,\gamma k}\rangle +(L_2+L_3)\langle Q_{\alpha\delta,\delta\beta},u_{\gamma,k}Q_{\alpha\beta,\gamma k}\rangle .
\end{eqnarray} Then, similarly as above, we get
\begin{eqnarray}
    \nonumber
    &&K_2+K_5
    \\&\leq& \nonumber
    C\int_{\mathbb{R}^3}\left(|\Delta u||\nabla Q||\nabla^2 Q|+|\Delta u||\nabla Q|^2+|\Delta u||Q||\nabla^2Q|+|\nabla u||\nabla^2 Q|^2\right){\rm d}x
    \\&\leq& \nonumber
    C\|\nabla Q\|_{L^\infty}\|\Delta u\|_{L^2}\|\Delta Q\|_{L^2}+C\|\Delta u\|_{L^2}\|\nabla Q\|_{L^6}\|\nabla Q\|_{L^3}
    \\&& \nonumber
    +C\|\Delta u\|_{L^2}\|Q\|_{L^6}\|\nabla^2Q\|_{L^3}+C\|\nabla u\|_{L^6}\|\Delta Q\|_{L^2}\|\Delta Q\|_{L^3}
    \\&\leq& \nonumber
    C\|\Delta u\|_{L^2}\|\Delta Q\|_{L^2}\|\nabla\Delta Q\|_{L^2}^{\frac34}+C\|\Delta u\|_{L^2}\|\Delta Q\|_{L^2}^{\frac32}
    \\&& \nonumber
    +C\|\Delta u\|_{L^2}\|\Delta Q\|_{L^2}^{\frac12}\|\nabla\Delta Q\|_{L^2}^{\frac12}+C\|\Delta u\|_{L^2}\|\Delta Q\|_{L^2}^{\frac32}\|\nabla\Delta Q\|_{L^2}^{\frac12}
    \\&\leq&
    \mu^{\frac12}\widetilde{\mathcal{A}}(t)\|\Delta u\|_{L^2}^2+C\mu^{-\frac14}\widetilde{\mathcal{A}}(t)\|\nabla\Delta Q\|_{L^2}^2
    +C(\mu^{-\frac54}+\mu^{-\frac34}+\mu^{-\frac12})\|\Delta Q\|_{L^2}^2,
\end{eqnarray}

Meanwhile, we also have
\begin{eqnarray}
    \nonumber
    K_1&\leq& \|u\|_{L^4}\|\nabla u\|_{L^4}\|\Delta u\|_{L^2}\leq
    C\|u\|_{L^2}^{\frac14}\|\nabla u\|_{L^2}^{\frac34}\|\nabla u\|_{L^2}^{\frac14}\|\Delta u\|_{L^2}^{\frac34}\|\Delta u\|_{L^2}
    \\&\leq&\nonumber
    \mu^{\frac12}\|\Delta u\|_{L^2}^2+\mu^{\frac12}\|\nabla u\|_{L^2}^2\|\Delta u\|_{L^2}^2+C\mu^{-\frac72}\|\nabla u\|^2
    \\&\leq&
    \mu^{\frac12}\widetilde{\mathcal{A}}(t)\|\Delta u\|_{L^2}^2+C\mu^{-\frac72}\widetilde{\mathcal{A}}(t),
\end{eqnarray} and
\begin{eqnarray}
    \nonumber
    &&K_4+K_8
    \\&\leq& \nonumber
    C\int_{\mathbb{R}^3}\left(|\Delta u||\nabla Q|^2+|\Delta u||Q||\nabla^2 Q|+|\nabla^2 Q||\nabla\Delta Q|\right){\rm d}x
    \\&\leq& \nonumber
    C\|\Delta u\|_{L^2}\|\nabla Q\|_{L^6}\|\nabla Q\|_{L^3}+C\|\Delta u\|_{L^2}\|Q\|_{L^6}\|\nabla^2Q\|_{L^3}
    +C\|\Delta Q\|_{L^2}\|\nabla\Delta Q\|_{L^2}
    \\&\leq&
    \mu^{\frac12}\widetilde{\mathcal{A}}(t)\|\Delta u\|_{L^2}^2+\left(\frac{\Gamma L_1^2}{4}+C\mu^{-\frac14}\widetilde{\mathcal{A}}(t)\right)\|\nabla\Delta Q\|_{L^2}^2+C(\mu^{-\frac34}+\mu^{-\frac12}+1)\widetilde{\mathcal{A}}(t).\quad\quad
\end{eqnarray}

Finally,
\begin{eqnarray}
    \nonumber
    K_{9}&\leq& \nonumber
    -a\Gamma L_1\|\Delta Q\|_{L^2}^2-a\Gamma(L_2+L_3)\|\nabla{\rm div} Q\|_{L^2}^2
    \\&& \nonumber
    +C\int_{\mathbb{R}^3}\left[|\nabla^2 Q|^2\left(|Q|^2+|Q|\right)+|\nabla^2 Q||\nabla Q|^2(|Q|+1)\right]{\rm d}x
    \\&\leq& \nonumber
    -a\Gamma L_1\|\Delta Q\|_{L^2}^2-a\Gamma(L_2+L_3)\|\nabla{\rm div} Q\|_{L^2}^2
    \\&& \nonumber
    +C\|\nabla^2 Q\|_{L^6}\|\nabla^2 Q\|_{L^2}\left(\|Q\|_{L^6}^2+\|Q\|_{L^3}\right)
    \\&& \nonumber
    +\|\nabla^2 Q\|_{L^6}\|\nabla Q\|_{L^2}\|\nabla Q\|_{L^6}\|Q\|_{L^6}+\|\nabla^2 Q\|_{L^6}\|\nabla Q\|_{L^3}\|\nabla Q\|_{L^2}
    \\&\leq&
    \frac{\Gamma L_1^2}{4}\|\nabla\Delta Q\|_{L^2}^2+C(\|\Delta Q\|_{L^2}^2+1).
\end{eqnarray}

In conclusion, we finish the proof of (\ref{est2}).
\end{proof}

From the proof of (2.15)-(2.17) and the Gronwall inequality, one has
\begin{eqnarray}
    \int_0^t\|\nabla u\|_{L^2}^2{\rm d}s+\int_0^t \left[L_1\|\Delta Q\|_{L^2}^2+(L_2+L_3)\|\nabla{\rm div}Q\|^2\right]{\rm d}s\leq Ce^{Ct}
\end{eqnarray} with some uniform constants $C$ depending on $a,b,c,\mu,\Gamma,L_i$ and the initial data. It yields that
$\widetilde{\mathcal{A}}(t)$ is integrable over $[0,T]$ for any positive $T$ (assume that $T>1$). Then there exists $M>0$ depending on $C$ and $T$ such that
\begin{eqnarray}
    \int_t^{t+1}\widetilde{\mathcal{A}}(t){\rm d}s\leq M
\end{eqnarray}
for any $t\in [0,T-1]$.

If
\begin{eqnarray}
    \mu^{\frac12}\geq \mu_0^{\frac12}\doteq C_1(\widetilde{\mathcal{A}}(0)+C_3M+2M)+\frac{4C_2^2}{\Gamma^2L_1^2}(\widetilde{\mathcal{A}}(0)+C_3M+2M)^2+1,
\end{eqnarray}
then initially there is some $T_0>0$ such that
\begin{eqnarray}
    \label{new}
    \mu-C_1\mu^{\frac12}\widetilde{\mathcal{A}}(t)\geq 0,\
    \frac{\Gamma L_1}{2}-C_2\mu^{-\frac14}\widetilde{\mathcal{A}}(t)\geq 0
\end{eqnarray} for all $t\in[0,T_0]$. Therefore, in this time interval we have
\begin{eqnarray}
    \frac{d}{dt}\widetilde{\mathcal{A}}(t)\leq C_3\widetilde{\mathcal{A}}(t).
\end{eqnarray}

We assume that $T_*$ is the largest one among such $T_0$ and claim that $T_*=T$. We argue by contradiction and only consider the breakdown of first inequality of (\ref{new}). Suppose that the first equations of (\ref{new}) is not always valid when $T_*<t\leq T$, and $\mu-C_1\mu^{\frac12}\widetilde{\mathcal{A}}(T_*)= 0$. In fact,
if $T_*\leq 1$, then
\begin{eqnarray*}
    C_1(\widetilde{\mathcal{A}}(0)+C_3M+2M)+1\leq  \mu^{\frac12}=C_1\widetilde{\mathcal{A}}(T_*)
    &\leq& C_1\left[\widetilde{\mathcal{A}}(0)+\int_0^{T_*}\frac{d}{dt}\widetilde{\mathcal{A}}(t){\rm d}t\right]
    \\ &\leq&
    C_1\left[\widetilde{\mathcal{A}}(0)+\int_0^{T_*}C_3\widetilde{\mathcal{A}}(t){\rm d}t\right]
    \\&\leq&
    C_1\left[\widetilde{\mathcal{A}}(0)+C_3M\right],
\end{eqnarray*}
which yields a contradiction.

If $1<T_*<T$, then we consider the interval $[T_*-1,T_*]$. From the definition of $M$, there exists $t_*\in (T_*-1,T_*)$
such that $\widetilde{\mathcal{A}}(t_*)\leq 2M$. Then
\begin{eqnarray*}
    C_1(\widetilde{\mathcal{A}}(0)+C_3M+2M)+1\leq  \mu^{\frac12}=C_1\widetilde{\mathcal{A}}(T_*)
    &\leq& C_1\left[\widetilde{\mathcal{A}}(t_*)+\int_{t_*}^{T_*}\frac{d}{dt}\widetilde{\mathcal{A}}(t){\rm d}t\right]
    \\ &\leq&
    2C_1M+C_1C_3M,
\end{eqnarray*}
which also yields a contradiction.

In conclusion, Lemma \ref{le:2} and the discussions above show that the solution $(u,Q)$ satisfies the following regularity
\begin{eqnarray*}
      u\in L^\infty(0,T;\mathcal{H})\cap L^2(0,T;H^2),\ Q\in L^\infty(0,T;H^2)\cap L^2(0,T;H^3),
\end{eqnarray*}
which completes the proof of Theorem \ref{th:1.1}.
\end{proof}
\setcounter{section}{3} \setcounter{equation}{0}
\section{Continuous dependence on initial data}

\noindent {\bf Proof of Theorem 2.2}: We first get the following system with respect to $(\delta u,\delta Q)$ from (\ref{model}):
\begin{eqnarray}
\begin{cases}\label{delta}
    \partial_t\delta u+\mathcal{P}(\delta u\cdot\nabla\delta u)=\mu\Delta\delta u-\mathcal{P}(\nabla\cdot(\frac{\partial \mathcal{F}_{LG}[\delta Q]}{\partial \nabla\delta Q}\odot\nabla\delta Q))
    \\
    \quad
    +\mathcal{P}(\nabla\cdot(\delta Q (M_{\delta Q}+E_{\delta Q})-(M_{\delta Q}+E_{\delta Q})\delta Q))-\mathcal{P}(u_2\cdot\nabla\delta u+\delta u\cdot\nabla u_2)
    \\
    \quad
    +\mathcal{P}(\nabla\cdot(\delta Q (M_{Q_2}+E_{Q_2})-(M_{Q_2}+E_{Q_2})\delta Q))
    \\ \quad
    +\mathcal{P}(\nabla\cdot(Q_2(M_{\delta Q}+E_{\delta Q})-(M_{\delta Q}+E_{\delta Q})Q_2))
    \\
    \quad
    -\mathcal{P}(\nabla\cdot(\frac{\partial \mathcal{F}_{LG}[\delta Q]}{\partial \nabla\delta Q}\odot\nabla Q_2))-\mathcal{P}(\nabla\cdot(\frac{\partial \mathcal{F}_{LG}[Q_2]}{\partial \nabla\delta Q_2}\odot\nabla\delta Q)),
    \\
    (\partial_t+\delta u\cdot\nabla)\delta Q-\delta\Omega\delta Q+\delta Q\delta\Omega+\delta u\nabla Q_2+u_2\cdot\nabla\delta Q \\
    \quad +Q_2\delta\Omega-\delta\Omega Q_2+\delta Q\Omega_2-\Omega_2\delta Q
    \\
    =\Gamma\left(M_{\delta Q}+E_{\delta Q}-a\delta Q+b[\delta QQ_1+Q_2\delta Q-\frac{{\rm tr}(\delta QQ_1+Q_2\delta Q)}{3}I_3]\right)
    \\ \quad
    -\Gamma c(\delta Q{\rm tr}(Q_1^2)+Q_2[{\rm tr}(Q_1\delta Q+\delta QQ_2)]).
\end{cases}
\end{eqnarray}

Multiplying the second equation of system (\ref{delta}) by $(M_{\delta Q}+E_{\delta Q}+K\delta Q)$, taking the trace and integrating over $\mathbb{R}^3$ and then summing with the first equation multiplied by $\delta u$ and integrated over $\mathbb{R}^3$, we have
\begin{eqnarray}
    \nonumber
    &&\frac12\frac{d}{dt}\int_{\mathbb{R}^3}\left(K|\delta Q|^2+L_1|\nabla\delta Q|^2+(L_2+L_3)|{\rm div}\delta Q|^2+L_4e_{l\alpha k}\delta Q_{l\beta}\delta Q_{\alpha\beta,k}\right){\rm d}x
    \\&&\nonumber
    +\mu\|\nabla\delta u\|_{L^2}^2+ \Gamma\|M_{\delta Q}+E_{\delta Q}\|_{L^2}^2
    \\&=&\nonumber
    \underbrace{\int_{\mathbb{R}^3}{\rm tr}\left([\delta u\nabla Q_2+u_2\nabla\delta Q+\delta Q\Omega_2-\Omega_2\delta Q](M_{\delta Q}+E_{\delta Q})\right){\rm d}x}_{N_1}
    \\&&\nonumber
    \underbrace{-a\Gamma L_1\|\nabla\delta Q\|_{L^2}^2-a\Gamma (L_2+L_3)\|{\rm div}\delta Q\|_{L^2}^2}_{N_2}
    \underbrace{+a\Gamma\int_{\mathbb{R}^3}{\rm tr}(\delta QE_{\delta Q}){\rm d}x}_{N_3}
    \\&&\nonumber
    \underbrace{-b\Gamma \int_{\mathbb{R}^3}{\rm tr}((\delta QQ_1+Q_2\delta Q)(M_{\delta Q}+E_{\delta Q})){\rm d}x}_{N_4}
    \underbrace{+c\Gamma \int_{\mathbb{R}^3}{\rm tr}(\delta Q(M_{\delta Q}+E_{\delta Q})){\rm tr}(Q_1^2){\rm d}x}_{N_5}
    \\&&\nonumber
    \underbrace{+c\Gamma \int_{\mathbb{R}^3}{\rm tr}(Q_2(M_{\delta Q}+E_{\delta Q})){\rm tr}(Q_1\delta Q+\delta QQ_2){\rm d}x}_{N_6}
    \underbrace{-K\int_{\mathbb{R}^3}{\rm tr}(\delta u\nabla Q_2\delta Q){\rm d}x}_{N_7}
    \\&&\nonumber
    \underbrace{-K\int_{\mathbb{R}^3}{\rm tr}(Q_2\delta\Omega\delta Q){\rm d}x+K\int_{\mathbb{R}^3}{\rm tr}(\delta\Omega Q_2\delta Q){\rm d}x}_{N_8}
    \underbrace{-K\Gamma L_1\|\nabla Q\|_{L^2}^2}_{N_9} \underbrace{-K\Gamma(L_2+L_3)\|{\rm div} Q\|_{L^2}^2}_{N_{10}}
    \\&&\nonumber
    \underbrace{+K\int_{\mathbb{R}^3}E_{\delta Q}\delta Q{\rm d}x}_{N_{11}}
    \underbrace{-aK\Gamma\|\delta Q\|_{L^2}^2}_{N_{12}}
    \underbrace{+bK\Gamma\int_{\mathbb{R}^3}{\rm tr}\left(\delta QQ_1\delta Q+Q_2(\delta Q)^2\right){\rm d}x}_{N_{13}}
    \\&&\nonumber
    \underbrace{-cK\Gamma\int_{\mathbb{R}^3}{\rm tr}(Q_1)^2|\delta Q|^2{\rm d}x}_{N_{14}}
    \underbrace{-cK\Gamma\int_{\mathbb{R}^3}{\rm tr}(Q_2\delta Q){\rm tr}(Q_1\delta Q+\delta Q Q_2){\rm d}x}_{N_{15}}
    \\&&\nonumber
    \underbrace{-\int_{\mathbb{R}^3}(u_2\nabla\delta u+\delta u\nabla u_2)\delta u{\rm d}x}_{N_{16}}
    \underbrace{-\int_{\mathbb{R}^3}(\delta Q(M_{Q_2}+E_{Q_2})-(M_{Q_2}+E_{Q_2})\delta Q)\nabla\delta u{\rm d}x}_{N_{17}}
    \\&&
    \underbrace{+\int_{\mathbb{R}^3}\left(\frac{\partial \mathcal{F}_{LG}[\delta Q]}{\partial \nabla\delta Q}\odot\nabla Q_2\right)\nabla\delta u{\rm d}x+\int_{\mathbb{R}^3}\left(\frac{\partial \mathcal{F}_{LG}[Q_2]}{\partial \nabla\delta Q_2}\odot\nabla\delta Q\right)\nabla\delta u{\rm d}x}_{N_{18}}.
\end{eqnarray}

Firstly, we get the dissipation term of $Q$ with second order derivative in space that
\begin{eqnarray}
    \nonumber
    \|M_{\delta Q}+E_{\delta Q}\|_{L^2}^2&=&L_1^2\|\Delta\delta Q\|_{L^2}^2+\int_{\mathbb{R}^3}{\rm tr}(M_{\delta Q}+E_{\delta Q}-L_1\Delta\delta Q)^2{\rm d}x
    \\&& \nonumber
    +2L_1\int_{\mathbb{R}^3}\Delta Q:(M_{\delta Q}+E_{\delta Q}-L_1\Delta\delta Q){\rm d}x
    \\&\geq& \nonumber
    L_1^2\|\Delta\delta Q\|_{L^2}^2+2L_1(L_2+L_3)\|\nabla{\rm div}\delta Q\|_{L^2}^2-C\int_{\mathbb{R}^3}|\Delta\delta Q||\nabla\delta Q|{\rm d}x
    \\&\geq&
    \frac{L_1^2}{2}\|\Delta\delta Q\|_{L^2}^2-C\|\nabla\delta Q\|_{L^2}^2.
\end{eqnarray}
Then using the H\"older inequality, the interpolation inequalities, and the Cauchy inequality, we have
\begin{eqnarray*}
    \nonumber
    N_{1,1}&\leq& C\int_{\mathbb{R}^3}|\delta u||\nabla Q_2|(|\nabla^2\delta Q|+|\nabla\delta Q|){\rm d}x
    \\&\leq&
    C\|\delta u\|_{L^3}\|\nabla Q_2\|_{L^6}(\|\Delta\delta Q\|_{L^2}+\|\nabla\delta Q\|_{L^2})
    \\&\leq&
    C\|\delta u\|_{L^2}^{\frac12}\|\nabla\delta u\|_{L^2}^{\frac12}(\|\Delta\delta Q\|_{L^2}+\|\nabla\delta Q\|_{L^2})
    \\&\leq&
    \varepsilon\|\Delta\delta Q\|_{L^2}^2+\varepsilon\|\nabla\delta u\|_{L^2}^2+C(\varepsilon)(\|\delta u\|_{L^2}^2+\|\nabla\delta Q\|_{L^2}^2),
    \\ \nonumber
    N_{1,2}&\leq& C\int_{\mathbb{R}^3}|u_2||\nabla\delta Q|(|\nabla^2\delta Q|+|\nabla\delta Q|){\rm d}x
    \\&\leq&
    C\|u_2\|_{L^6}\|\nabla\delta Q\|_{L^3}(\|\Delta\delta Q\|_{L^2}+\|\nabla\delta Q\|_{L^2})
    \\&\leq&
    C\|\nabla u_2\|_{L^2}\|\nabla\delta Q\|_{L^2}^{\frac12}\|\Delta\delta Q\|_{L^2}^{\frac12}(\|\Delta\delta Q\|_{L^2}+\|\nabla\delta Q\|_{L^2})
    \\&\leq&
    \varepsilon\|\Delta\delta Q\|_{L^2}^2+C(\varepsilon)\|\nabla\delta Q\|_{L^2}^2,
    \\ \nonumber
    N_{1,3}+N_{1,4}&\leq& C\int_{\mathbb{R}^3}|\delta Q||\nabla u_2|(|\nabla^2\delta Q|+|\nabla\delta Q|){\rm d}x
    \\&\leq&
    \|\delta Q\|_{L^\infty}\|\nabla u_2\|_{L^2}(\|\Delta\delta Q\|_{L^2}+\|\nabla\delta Q\|_{L^2})
    \\&\leq& \nonumber
    C\|\delta Q\|_{L^2}^{\frac14}\left(\|\delta Q\|_{L^2}^{\frac34}+\|\Delta\delta Q\|_{L^2}^{\frac34}\right)(\|\Delta\delta Q\|_{L^2}+\|\nabla\delta Q\|_{L^2})
    \\&\leq&
    \varepsilon\|\Delta\delta Q\|_{L^2}^2+C(\varepsilon)(\|\delta Q\|_{L^2}^2+\|\nabla\delta Q\|_{L^2}^2),
    \\ \nonumber
    N_2+N_{12}&\leq& |a|\Gamma L_1\|\nabla\delta Q\|_{L^2}^2+|a|\Gamma (L_2+L_3)\|{\rm div}\delta Q\|_{L^2}^2+|a|K\Gamma \|\delta Q\|_{L^2}^2,
    \\
    N_3&\leq& C(\|\delta Q\|_{L^2}^2+\|\nabla\delta Q\|_{L^2}^2),
    \\ \nonumber
    N_4&\leq& C\|\delta Q\|_{L^6}\left(\|Q_1\|_{L^3}+\|Q_2\|_{L^3}\right)(\|\Delta\delta Q\|_{L^2}+\|\nabla\delta Q\|_{L^2})
    \\&\leq&
    \varepsilon\|\Delta\delta Q\|_{L^2}^2+C(\varepsilon)\|\nabla\delta Q\|_{L^2}^2,
    \\ \nonumber
    N_5&\leq& C\|\delta Q\|_{L^6}(\|\Delta\delta Q\|_{L^2}+\|\nabla\delta Q\|_{L^2})\|Q_1\|_{L^6}^2
    \\&\leq&
    \varepsilon\|\Delta\delta Q\|_{L^2}^2+C(\varepsilon)\|\nabla\delta Q\|_{L^2}^2,
    \\ \nonumber
    N_6&\leq&
    C\|Q_2\|_{L^6}(\|\Delta\delta Q\|_{L^2}+\|\nabla\delta Q\|_{L^2})\|Q_1\|_{L^6}\|\delta Q\|_{L^6}
    \\&&
    +C\|Q_2\|_{L^6}^2(\|\Delta\delta Q\|_{L^2}+\|\nabla\delta Q\|_{L^2})\|\delta Q\|_{L^6}
    \\&\leq&
    \varepsilon\|\Delta\delta Q\|_{L^2}^2+C(\varepsilon)\|\nabla\delta Q\|_{L^2}^2,
    \\ \nonumber
    N_{7}&\leq& C\|\delta u\|_{L^2}\|\nabla Q_2\|_{L^3}\|\delta Q\|_{L^6}
    \leq C(\|\delta u\|_{L^2}^2+\|\nabla\delta Q\|_{L^2}^2),
    \\ \nonumber
    N_8&\leq& C\|Q_2\|_{L^3}\|\nabla\delta u\|_{L^2}\|\delta Q\|_{L^6}
    \leq
    \varepsilon\|\nabla\delta u\|_{L^2}^2+C(\varepsilon)\|\nabla\delta Q\|_{L^2}^2,
    \\
    N_9+N_{10}+N_{14}&\leq&0,
    \\
    N_{11}&\leq& C(\|\delta Q\|_{L^2}^2+\|\nabla\delta Q\|_{L^2}^2),
    \\
    N_{13}&\leq& C\left(\|Q_1\|_{L^6}+\|Q_2\|_{L^6}\right)\|\delta Q\|_{L^3}\|\delta Q\|_{L^2}
    \\&\leq&
    C\|\delta Q\|_{L^2}^{\frac32}\|\nabla\delta Q\|_{L^2}^{\frac12}\leq C\left(\|\delta Q\|_{L^2}^2+\|\nabla\delta Q\|_{L^2}^2\right),
    \\ \nonumber
    N_{15}&\leq&
    C\|Q_2\|_{L^6}\|Q_1\|_{L^2}\|\delta Q\|_{L^6}^2+\|Q_2\|_{L^\infty}^2\|\delta Q\|_{L^2}^2
    \\&\leq&
    C\left(\|\delta Q\|_{L^2}^2+\|\nabla\delta Q\|_{L^2}^2\right),
    \\ \nonumber
    N_{16}&\leq&
    C\|u_2\|_{L^6}\|\nabla\delta u\|_{L^2}\|\delta u\|_{L^3}+C\|\nabla u_2\|_{L^2}\|\delta u\|_{L^3}\|\delta u\|_{L^6}
    \\&\leq&
    C\|\delta u\|_{L^2}^{\frac12}\|\nabla\delta u\|_{L^2}^{\frac32}
    \leq
    \varepsilon\|\nabla\delta u\|_{L^2}^2+C(\varepsilon)\|\delta u\|_{L^2}^2,
    \\ \nonumber
    N_{17}&\leq& C\|\delta Q\|_{L^\infty}\left(\|\Delta Q_2\|_{L^2}+\|\nabla Q_2\|_{L^2}\right)\|\nabla\delta u\|_{L^2}
    \\&\leq& \nonumber
    C\|\delta Q\|_{L^2}^{\frac14}\|\Delta\delta Q\|_{L^2}^{\frac34}\|\nabla\delta u\|_{L^2}
    \\&\leq&
    \varepsilon\|\nabla\delta u\|_{L^2}^2+\varepsilon\|\Delta\delta Q\|_{L^2}^2+C(\varepsilon)\|\delta Q\|_{L^2}^2,
    \\ \nonumber
    N_{18}&\leq& C\left(\|\nabla\delta Q\|_{L^3}+\|\delta Q\|_{L^3}\right)\left(\|\nabla Q_2\|_{L^6}+\|Q_2\|_{L^6}\right)\|\nabla\delta u\|_{L^2}
    \\&\leq& \nonumber
    C\left(\|\nabla\delta Q\|_{L^2}^{\frac12}\|\Delta\delta Q\|_{L^2}^{\frac12}+\|\delta Q\|_{L^2}^{\frac12}\|\nabla\delta Q\|_{L^2}^{\frac12}\right)\|\nabla\delta u\|_{L^2}
    \\&\leq&
    \varepsilon\|\nabla\delta u\|_{L^2}^2+\varepsilon\|\Delta\delta Q\|_{L^2}^2+C(\varepsilon)\left(\|\nabla\delta Q\|_{L^2}^2+\|\delta Q\|_{L^2}^2\right).
\end{eqnarray*}

Finally, choosing $\varepsilon$ small enough, we have
\begin{eqnarray}
    \nonumber
    &&\frac12\frac{d}{dt}\int_{\mathbb{R}^3}\left(K|\delta Q|^2+L_1|\nabla\delta Q|^2+(L_2+L_3)|{\rm div}\delta Q|^2+L_4e_{l\alpha k}\delta Q_{l\beta}\delta Q_{\alpha\beta,k}\right){\rm d}x
    \\&& \nonumber
    +\mu\int_{\mathbb{R}^3}|\nabla\delta u|^2{\rm d}x+\frac{\Gamma L_1^2}{2}\int_{\mathbb{R}^3}|\Delta\delta Q|^2{\rm d}x
    \\&\leq&
    C\left(\|\delta u\|_{L^2}^2+\|\delta Q\|_{L^2}^2+\|\nabla\delta Q\|_{L^2}^2\right).
\end{eqnarray}

Therefore, Theorem \ref{th:1.2} follows from this inequality, the Cauchy inequality and the Gronwall inequality if we choose $K>0$ large enough. \qed

\section*{Acknowledgements}This work is supported by the National
Basic Research Program of China (973 Program) (No.2011CB808002), the National Natural
Science Foundation of China (No.11071086, No.11371152, No.11401439 and No.11128102), the Natural Science Foundation of Guangdong Province (No.S2012010010408), the Foundation for Distinguished Young Talents in Higher Education of Guangdong, China (No. 2014KQNCX162), the University Special Research Foundation for Ph.D Program (No.20104407110002), and the Science Foundation for Young Teachers of Wuyi University (No. 2014zk06). The authors would like to thank the anonymous referees for their constructive and interesting suggestions. The authors are also grateful to Prof. Wei Wang for constructive and
helpful discussions while he visited SCNU in April, 2014.


\end{document}